\documentclass[12pt,reqno,oneside]{amsart}

\usepackage[utf8]{inputenc}
\usepackage[T1]{fontenc}

\usepackage{amsmath, amssymb, amsthm}
\usepackage{geometry}
\usepackage{setspace}
\usepackage{needspace}
\usepackage{enumitem}
\usepackage{hyperref}
\hypersetup{
  colorlinks,
  citecolor=blue,
  linkcolor=blue,
  urlcolor=blue}
\usepackage{marginnote}
\usepackage{xcolor}
\usepackage{upgreek}
\usepackage{tikz}
\usetikzlibrary{calc}
\usepgflibrary {shadings} 
\usetikzlibrary{intersections}
\usetikzlibrary{patterns}
\usepackage{mathtools}
\usepackage{verbatim}
\usepackage{lipsum}  
\usepackage{mathrsfs}
\usepackage{stmaryrd}
\usepackage[
    colorinlistoftodos,
    backgroundcolor=green!20!white,
    bordercolor=green!75!black,
    linecolor=green!75!black
]{todonotes}
\usepackage{aliascnt}
\usepackage{cleveref}
\usepackage{bm}

\numberwithin{equation}{section}

\theoremstyle{plain}
\newtheorem{theorem}{Theorem}[section]
\newtheorem*{theoremA}{Theorem A}
\newaliascnt{lemma}{theorem}
\newtheorem{lemma}[lemma]{Lemma}
\aliascntresetthe{lemma}
\newaliascnt{proposition}{theorem}
\newtheorem{proposition}[proposition]{Proposition}
\aliascntresetthe{proposition}
\newaliascnt{corollary}{theorem}

\aliascntresetthe{corollary}
\theoremstyle{definition}
\newaliascnt{definition}{theorem}

\aliascntresetthe{definition}
\theoremstyle{remark}
\newaliascnt{remark}{theorem}
\newtheorem{remark}[remark]{Remark}
\aliascntresetthe{remark}
\newtheorem*{remark*}{Remark}

\newcommand{\C}{\mathbf{C}}
\newcommand{\R}{\mathbf{R}}
\newcommand{\Q}{\mathbf{Q}}

\renewcommand{\div}{\operatorname{div}}

\renewcommand{\Re}{\operatorname{Re}}

\newcommand{\vol}{\mathrm{vol}}

\renewcommand{\b}[1]{\bm{#1}}

\newcommand{\dist}{\operatorname{dist}}
\newcommand{\Ric}{\operatorname{Ric}}
\newcommand{\Scal}{\operatorname{Scal}}
\newcommand{\spt}{\operatorname{spt}}
\newcommand{\tr}{\operatorname{tr}}

\newcommand{\eps}{\varepsilon}
\begin{document}

\title[Integral curvature estimates and geodesic limits]{Integral curvature estimates and geodesic limits for stable abelian Higgs fields}

\begin{abstract}
We prove that, on closed three-manifolds, the limit interface of a sequence of stable abelian Higgs critical points with uniformly bounded energy is a finite union of smooth closed immersed geodesics, possibly with multiplicity and self-intersections. This follows from an integral estimate for the diffuse curvature, derived from an integral bound on the discrepancy.
\end{abstract}

\author{Marco Badran}
\address{Department of Decision Sciences and BIDSA, Bocconi University, Milano, Italy}
\email{marco.badran@unibocconi.it}

\author{Marco A. M. Guaraco}
\address{
Department of Mathematics,
Imperial College London,\newline
Huxley Building,
180 Queen's Gate,
London SW7 2AZ,
United Kingdom
}
\email{guaraco@imperial.ac.uk}

\author{Aria Halavati}
\address{Department of Decision Sciences and BIDSA, Bocconi University, Milano, Italy}
\email{aria.halavati@unibocconi.it}

\maketitle

%\tableofcontents

\subsection*{AI usage statement.}
Inspired by the recent work of the first author on the classification of entire stable solutions, we felt it should be possible to classify limits of stable solutions in closed manifolds using geometric measure theory, without developing a strong regularity theory. We prompted the LLM Astra-6 to explore the possibilities in that direction. The LLM produced a first proof with little help from us. We checked and rewrote it in order to highlight its novel mathematical ideas and improve the clarity of the presentation.

\section{Introduction}

Diffuse approximations connect nonlinear elliptic equations with the geometry of minimal submanifolds. In codimension one, this connection goes back to the work of Modica and Mortola \cite{Modica-Mortola1977} on the Allen--Cahn equation.
In codimension two, the abelian Higgs model plays an analogous role. Let $(M,g)$ be a smooth oriented Riemannian manifold, let $\Omega\subset M$ be open, and let $L\to\Omega$ be a Hermitian line bundle. We consider the self-dual Yang--Mills--Higgs energy
\begin{equation}\label{eq: energy}
\begin{split}
E_\eps(u,\nabla;\Omega)&\coloneqq\frac12\int_\Omega e_\eps(u,\nabla)\,d\vol_g,\\
e_\eps(u,\nabla)&\coloneqq|\nabla u|^2+\eps^2|F_\nabla|^2+\frac1{4\eps^2}(1-|u|^2)^2,
\end{split}
\end{equation}
where $u$ is a section and $\nabla$ is a smooth unitary connection with real curvature $F_\nabla=i\nabla^2$. A stable critical point in $\Omega$ is defined as a pair $(u,\nabla)$ satisfying
\begin{equation*}
    \delta E_\eps(u,\nabla)=0,\quad \delta^2E_\eps(u,\nabla)\geq0,
\end{equation*}
where both variations are tested against fields 
compactly supported in $\Omega$.  The compactness theorem of Pigati and Stern \cite{Pigati-Stern2021} shows that, under a uniform energy bound, the energy measures of sequences of critical points with $\eps_k\to0$ converge subsequentially to the weight measure of a stationary integral varifold of codimension two, up to a fixed normalisation. This result paved the way to many recent advances in the existence of solutions, regularity and geometric evolution \cite{Badran-delPino2023,Badran-delPino2024,Parise-Pigati-Stern2024b,DePhilippis-Pigati2024,DePhilippis-Halavati-Pigati2026,Nguyen-Wang2026}.

\medskip

Stability has proved especially effective in codimension-one regularity theory \cite{Tonegawa2005,Tonegawa-Wickramasekera2012,Wickramasekera2014,Wang-Wei2019a,Wang-Wei2019b, Mantoulidis2021,Chodosh-Mantoulidis2020}, but its geometric consequences in codimension two remain less understood. Here we establish a regularity theorem for stable abelian Higgs fields in dimension three.

\begin{theoremA}
Let $\Omega$ be an open subset of a smooth oriented Riemannian three-manifold $(M^3,g)$. Consider sequences $\eps_k\to0$ and $(u_k,\nabla_k)$ of smooth stable critical points of \eqref{eq: energy} with $\eps=\eps_k$, $|u_k|\leq1$ and
\begin{equation*}
\sup_k E_{\eps_k}(u_k,\nabla_k;\Omega)\leq\Lambda.
\end{equation*}
Then, for every $\Omega'\Subset\Omega$,
\begin{enumerate}
\item The discrepancies
\begin{equation*}
\xi_k\coloneqq\eps_k|F_{\nabla_k}|-\frac{1-|u_k|^2}{2\eps_k}
\end{equation*}
satisfy $\|\eps_k^{-1}\xi_k\|_{L^1(\Omega')}\to0$.
\item Define $\nu_k\coloneqq(*F_{\nabla_k})^\sharp/|F_{\nabla_k}|$ where $F_{\nabla_k}\neq0$, and choose any measurable unit extension to its zero set. For the stress-energy tensor defined in \eqref{eq: stress convergence},
\begin{equation}
\lim_{k\to\infty}\|T_{\eps_k}-e_{\eps_k}\nu_k^\flat\otimes\nu_k^\flat\|_{L^1(\Omega')}=0.
\end{equation}
Moreover, the divergence-free vector field $q_{\eps_k}$ associated with the modified Jacobian in \eqref{eq: core vorticity formula} satisfies
\begin{equation}
\limsup_{k\to\infty}\int_{B_r(x)}|\nabla_{q_{\eps_k}}\nu_k|\,d\vol_g\leq C\sqrt r
\end{equation}
for every normal ball $B_{2r}(x)\subset\Omega'$ with $0<2r<r_0$. The integrand is extended by zero where $q_{\eps_k}=0$. The constants depend on $\Lambda$, the fixed interior domains and their geometry.
\item Every subsequential limit varifold supplied by \cite{Pigati-Stern2021} is locally a finite union of smooth geodesic arcs without interior endpoints, counted with positive integer multiplicities and possibly intersecting.
\end{enumerate}
If $\Omega=M$ is closed, each such limit is a finite union of closed immersed geodesics, counted with positive integer multiplicities.
\end{theoremA}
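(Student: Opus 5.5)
The plan is to prove the three items in order, each feeding the next; the global statement for closed $M$ then follows by a compactness argument.

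\textbf{Step 1: discrepancy.} I will plug into the second variation $\delta^2E_\eps\ge0$ a test variation built from the solution itself, twisted by a cutoff $\varphi\in C_c^\infty(\Omega)$. The natural choice is $(\varphi\,\nabla_X u,\ \varphi\,\iota_X F_\nabla)$ for coordinate or Killing-type fields $X$; alternatively one can use the gauge-invariant variation $\delta u=\varphi\,\eta u$ along the modulus direction. I will then combine this with the Bochner-type identity satisfied by $\xi_\eps$ at critical points. In flat space $\xi=0$ is the vortex equation, and one has $\Delta\xi\ge \eps^{-2}|u|^2\xi$ up to curvature terms. The goal is a bound of the form
\begin{equation*}
\int\varphi^2\eps^{-1}|\xi_\eps|\lesssim \int|\nabla\varphi|^2\,\eps e_\eps+\text{(curvature)}\cdot\eps\,E_\eps ,
\end{equation*}
or at least an estimate that is $o(1)$ after using the Pigati--Stern convergence. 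The key structural fact is that stability forbids concentration of the anti-self-dual part, in the spirit of the first author's classification of entire stable solutions: blow-ups at scale $\eps$ are entire stable solutions on $\R^3$, hence vortex lines with $\xi\equiv0$. I will therefore prove $L^1$-smallness of $\eps^{-1}\xi$ by contradiction and blow-up. Wherever $\eps^{-1}|\xi|$ has nontrivial mass at scale $\eps$, the rescaled limit would be a nonzero-discrepancy entire stable solution, which the classification excludes. A separate energy-decay argument handles the regions far from the core.

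\textbf{Step 2: stress tensor alignment and the curvature bound.} With $\xi$ small, I will use the pointwise algebraic identity expressing $T_\eps$ in terms of $\nabla u$, $F$ and the potential. When the self-duality relations $\eps|F|\approx(1-|u|^2)/2\eps$ and $\nabla_{\nu^\perp}u\approx \pm i\nabla_{\nu^\perp} u$ hold, this gives $T_\eps\approx e_\eps\,\nu^\flat\otimes\nu^\flat$ up to terms controlled by $|\xi|$ and by $|\bar\partial_\nabla u|^2$. The Bogomol'nyi identity
\begin{equation*}
e_\eps=2|\bar\partial u|^2+|\xi|^2+\text{(Jacobian)}+\text{(divergence)}
\end{equation*}
lets me control $\int|\bar\partial u|^2$ by the discrepancy, yielding the first claim of item 2. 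For the $\sqrt r$ estimate, I will differentiate $\nu$ along the vorticity field $q_\eps$. Since $q_\eps$ is divergence free and approximately $e_\eps\nu$, $\nabla_q\nu$ is essentially the divergence of the matrix field $q\otimes\nu$, which matches $\div T_\eps$ (zero) up to the error $T-e\,\nu\otimes\nu$. This error is only $L^1$-small, however, so I need a quantitative estimate. I will test stability with $\varphi\,\nabla_{V}(u,\nabla)$, taking $V$ normal to $\nu$, to obtain a weighted $L^2$ bound
\begin{equation*}
\int\varphi^2|\nabla_q\nu|^2/|q|\le C\int|\nabla\varphi|^2 e_\eps .
\end{equation*}
This is the diffuse analogue of stability controlling the second fundamental form, or rather the curvature of the line. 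Cauchy--Schwarz on $B_r$ with the density bound $\int_{B_{2r}}e_\eps\le Cr$ then gives $(Cr\cdot r^{-1}\cdot r^{0})^{1/2}$-type control, hence $C\sqrt r$.

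\textbf{Step 3: regularity of the limit.} Passing to the limit, the varifold $V$ has tangent direction $\nu$ $\|V\|$-a.e. The $\sqrt r$ bound passes to a curvature measure for the limit: the first variation is zero, and the varifold is the limit of currents $q_\eps$ whose direction field has $\nabla_q\nu$ with total mass $o(1)$ relative to a $\sqrt r$ decay. I will deduce that the tangent cones are unions of lines with constant direction on each branch and, via the Allard-type $\sqrt r$ excess decay, that $V$ is locally a finite union of $C^{1,1/2}$ curves. Stationarity makes each curve a geodesic. No endpoints can occur, since an endpoint would violate stationarity given the integer multiplicity from Pigati--Stern. Finiteness comes from the density lower bound and the energy bound. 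In closed $M$, each geodesic arc extends indefinitely, and finite total length forces it to close up.

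\textbf{Main obstacle.} The hardest step is the stability-based curvature estimate in Step 2: choosing a test variation whose second variation produces exactly $|\nabla_q\nu|^2$ with controllable errors. I would first try the variation $\varphi\,\iota_V$ applied to the pair $(\nabla u, F)$ with $V\perp\nu$, mimicking the Simons-type computation.
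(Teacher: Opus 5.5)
\textbf{Step~2: the turning estimate.} This is the decisive gap: you leave the estimate as something to try, and your Step~1 cannot supply its inputs.

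\emph{What Step~1 gives.} The blow-up/classification route is plausible for item~1 on its own. Rescaled limits are stable on flat $\R^3$ with linear energy growth, where the stability comparison with cutoffs at scale $R$ has vanishing errors and forces $\xi\equiv0$. But it gives only $o(1)$, with no rate, on each rescaled $\eps$-ball. With $\tilde\xi=\eps\,\xi_\eps(\eps\,\cdot)$ the rescaled discrepancy, $\int_{B_{R\eps}}\big(|d\xi_\eps|^2+\eps^{-2}\xi_\eps^2\big)\,d\vol_g\approx\eps^{-1}\int_{B_R}\big(|d\tilde\xi|^2+\tilde\xi^2\big)$, and the core needs $\sim\eps^{-1}$ such balls. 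So you do not get the uniform bounds $\|d\xi_\eps\|_{L^2(\Omega')}+\|\eps^{-1}\xi_\eps\|_{L^2(\Omega')}\le C$. The paper extracts these quantitatively from stability, by pairing $L[\mathbf W_a]$ with $(0,\psi X_a^\flat)$, $\psi=-\eps^{-1}\eta^2\xi_\eps$, and then uses them twice.

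\emph{First use: the bending along $\nu$.} Your variations $\varphi\,\nabla_V(u,\nabla)$ are of translation type; in the paper such pairs only provide the upper side of a comparison. The rotated pairs $\mathbf W_a=(i\nabla_{X_a}u,-B(X_a,\cdot))$ satisfy $\sum_a\Q[\zeta\mathbf W_a]\le\sum_a\Q[\zeta\mathbf V_a]\le C\int(\zeta^2+|d\zeta|^2)e_\eps$, by an exact identity with the translation pairs $\mathbf V_a$ and $B+iF\ge0$. Since $\Q=Q+\eps^2\int G^2$ and $Q\ge0$, the gauge term bounds $\int_{\Omega'}\eps^2|\div B-\eps^{-2}dh|^2$. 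Because $\div B-\eps^{-2}dh=-|F_\nabla|(\nabla_\nu\nu)^\flat+\eps^{-1}d\xi_\eps$, the $L^2$ bound on $d\xi_\eps$ is exactly what yields $\int_{\Omega'}\eps^2|F_\nabla|^2|\nabla_\nu\nu|^2\le C$.

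\emph{Second use: the transverse part.} $\nabla_{R_\eps}\nu$ is controlled by testing the discrepancy equation $(\Delta-\eps^{-2}|u|^2)\xi_\eps=\eps^{-1}\mathcal S_\eps+\eps|F_\nabla|\Ric(\nu,\nu)$ against $\eta h$. Here $\mathcal S_\eps=|\nabla_\nu u|^2+|\nabla_{e_1}u+i\nabla_{e_2}u|^2+\eps^2|F_\nabla||\nabla\nu|^2$ in a frame with $e_3=\nu$.

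\emph{Where $\sqrt r$ comes from.} It arises only because the bending bound holds with a cutoff of fixed size. With a cutoff adapted to $B_{2r}$, your proposed inequality gives $(C/r)^{1/2}(Cr)^{1/2}=C$. That bound is scale-invariant and cannot exclude junctions.

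\emph{Alignment.} Your alignment argument also fails in three dimensions. There $e_\eps=\big(|\nabla u|^2-(*K)(\nu)\big)+\xi_\eps^2+\langle(*(F_\nabla+dj_\nabla u))^\sharp,\nu\rangle$, and the last term is not a divergence when $\nu$ varies. So the Bogomol'nyi identity does not control the defect. The discrepancy equation does, and for that part $\|\eps^{-1}\xi_\eps\|_{L^1}\to0$ suffices.

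\textbf{Step~3: passing to the limit.} This step lacks the mechanism that transfers the turning bound to $V$. Stationary integral one-varifolds are already locally finite geodesic networks (Allard--Almgren), so endpoints are not the issue. The issue is balanced junctions such as a $120^\circ$ triple junction, which stationarity and integrality permit. Asserting that tangent cones are unions of lines is precisely what must be proved. Allard-type excess decay is unavailable at higher multiplicity and at crossings, both of which the theorem allows. Treating $V$ as a limit of the currents $q_\eps$ also loses information, because oriented currents can cancel.

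The paper instead works with unsigned quantities:
$|q_\eps|\,d\vol_g\rightharpoonup\mu$; the parts of $q_\eps$ negative along or transverse to $\nu$ vanish in $L^1_{\mathrm{loc}}$; and the $e_\eps$-weighted Young measure of $[\nu]$ is $\delta_{[\tau]}$. Then $\div q_\eps=0$ gives $\int d^H_x\varphi(x,\nu)(q_\eps)\,d\vol_g=-\int d_v\varphi(x,\nu)(\nabla_{q_\eps}\nu)\,d\vol_g$ for test functions on the unit tangent bundle. The choice $\varphi_r=\zeta(\dist(x,x_0)/r)\,a(P_{x\to x_0}v)$ with $a$ odd makes the limiting integrand independent of the sign of $\tau$. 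It yields $2\pi|\sum_im_ia(v_i)|\le C_a\sqrt r$ at each vertex. Letting $r\downarrow0$ gives $\sum_im_i\delta_{v_i}=\sum_im_i\delta_{-v_i}$, and uniqueness for the geodesic equation then joins paired arms through the vertex.
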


Theorem A, which follows from \Cref{thm: discrepancy}, \Cref{prop: integral curvature estimates,prop: stress jacobian approximation} and \Cref{thm: main} below,  concerns the geometry of the limiting varifold at arbitrary
multiplicity. It does not require a multiplicity-one hypothesis or exclude
intersections of distinct geodesics. Its proof has two main parts.

The analytic starting point is the the first author's work on entire stable abelian Higgs
fields~\cite{Badran2026b}. In that setting, appropriate energy growth and
expanding logarithmic cutoffs make the errors in a stability comparison
vanish, leading to exact identities and classification. We use the same
underlying test pairs with fixed compactly supported cutoffs. The cutoff
errors remain, and ambient curvature introduces additional terms, so the
comparison must be used quantitatively. 

The discrepancy $\xi_\eps$ compares the curvature and potential amplitudes.
The localized stability calculation, together with the critical-point
estimates of Pigati and Stern \cite{Pigati-Stern2021}, yields
\[
 \|d\xi_\eps\|_{L^2(\Omega')}
 +\|\eps^{-1}\xi_\eps\|_{L^2(\Omega')}\leq C,
 \qquad \Omega'\Subset\Omega.
\]
Passing from this bound to vanishing requires a further argument. The scalar
field equation identifies the weak limit of
$\eps^{-2}h_\eps\,d\vol_g$ with the energy measure. The nonnegative sequence
$\eps^{-1}\xi_\eps^-$ is bounded in $L^2$ and satisfies
$\eps^{-1}\xi_\eps^-\leq\eps^{-2}h_\eps$; any of its weak $L^2$ limits must
therefore be absolutely continuous and dominated by a measure concentrated
on a one-dimensional set. It follows that
$\eps^{-1}\xi_\eps\to0$ in $L^1_{\mathrm{loc}}$, after including the separate
positive-part estimate. Quantitative control thus replaces exact vanishing
at fixed $\eps$, and vanishing is recovered in the singular limit.

In three dimensions, the curvature also determines a vector field and its
unit direction,
\[
 b_\eps=(*F_{\nabla_\eps})^\sharp,
 \qquad
 \nu_\eps=\frac{b_\eps}{|b_\eps|}
 \quad\text{where }F_{\nabla_\eps}\neq0.
\]
Here the Hodge star and the metric identify two-forms with vectors. The same
stability calculation retains a second nonnegative term which, combined with
the discrepancy derivative bound and an identity for $b_\eps$, gives
\[
 \int_{\Omega'\cap\{|F_{\nabla_\eps}|>0\}}
 \eps^2|F_{\nabla_\eps}|^2|\nabla_{\nu_\eps}\nu_\eps|^2\,d\vol_g\leq C.
\]
This controls the bending of the magnetic direction with the curvature part
of the energy as weight. Further defect estimates follow by testing the
discrepancy equation against a cutoff times $h_\eps$. These deductions use
the discrepancy bounds already obtained, so the analytic estimates share the
same localized stability input.

To use these estimates in the limit, we use the modified Jacobian whose
associated vector field $q_\eps$ is divergence-free and supported in a fixed
sublevel set $\{|u_\eps|^2\leq b_0\}$, with $b_0<1$ (see \cite[III.25]{Bethuel-Brezis-Helein1993}). Although $b_\eps$ is
already divergence-free, localization near the vortex cores is needed to
compare the curvature weight with the energy density. A correction to the
cutoff magnetic field preserves zero divergence. The bending and defect
estimates then give the local bound
\[
 \limsup_{\eps\to0}\int_{B_r(x)}|\nabla_{q_\eps}\nu_\eps|\,d\vol_g
 \leq C\sqrt r.
\]
In particular, the contribution of this derivative becomes small on small
balls, uniformly after passage to the limit.

The use of Jacobians and their relation to the limiting geometry builds on
existing work. Pigati--Stern compactness includes identification of the
limiting stress tensor with the tangent projection of $V$. Jacobians
localized near vortex cores also appear in their work on the nonmagnetic
Ginzburg--Landau model~\cite{Pigati-Stern2023}. Related energy--Jacobian
compatibility near multiplicity-one planar limits is developed by De
Philippis, Halavati, and Pigati~\cite{DePhilippis-Halavati-Pigati2026}. We
verify the mass and direction compatibility needed for our choice of
$q_\eps$: $|q_\eps|\,d\vol_g\rightharpoonup2\pi\|V\|$, and the magnetic
direction, weighted by energy, converges to the unoriented tangent of $V$.
These are unsigned statements and are consistent with cancellation in an
oriented current limit. The additional control needed for the present
argument is the quantitative turning estimate and its use to exclude
branching.

The final step converts this estimate into a condition on each vertex. The
equation $\div q_\eps=0$ permits integration by parts with test functions
depending on both position and the direction $\nu_\eps$. The mass and
direction identification passes the position derivative to $V$, while the
turning estimate controls the term containing the direction derivative.
Testing near a vertex with a radial cutoff times an arbitrary smooth odd
function $a$ of the direction gives, first letting $\eps\to0$ and then
shrinking the ball,
\[
 \left|\sum_i m_i a(v_i)\right|\leq C_a\sqrt r,
 \qquad\text{hence}\qquad
 \sum_i m_i a(v_i)=0.
\]
The radial cutoff contributes a fixed endpoint term on each arm, whereas the
test function's derivative in the direction variable stays bounded as
$r\downarrow0$. Unlike stationarity, which tests only linear functions of the
direction, this identity holds for every odd $a$. It therefore implies
\[
 \sum_i m_i\delta_{v_i}=\sum_i m_i\delta_{-v_i}.
\]
Each arm pairs with an opposite arm of equal total multiplicity. Uniqueness
for the geodesic equation joins these pairs into full smooth geodesics,
proving the stated regularity of the limit.

%The discrepancy bound arises as a consequence of stability, using the Riemannian analogues of the test fields used in \cite{Badran2026b}. The discrepancy equation then gives vanishing of the first-order defect, which identifies the magnetic direction with the tangent direction of the limiting varifold. We also use the construction similar to \cite[(III.25)]{Bethuel-Brezis-Helein1993} of a divergence-free modified Jacobian supported in the vortex core.

%The quantitative discrepancy bounds and the gauge correction in the stability inequality yield integral curvature estimates for the magnetic direction. Combined with the linear energy growth, these show that its total turning along the modified Jacobian in a ball of radius $r$ is controlled, in the limit, by $C\sqrt r$. Testing the divergence-free identity against functions odd in the direction variable forces the outgoing directions at each vertex of the limiting geodesic network to pair antipodally, with matching multiplicities. The geodesic arcs therefore continue smoothly through every vertex, allowing intersections.

\begin{remark}
No better regularity than the one predicted by Theorem A can hold for limits of stable critical points, as shown by the following example. Consider the Riemannian manifold $(\mathbf{T}^3,g)$ with the metric $g=(1+\sin^2 y+V(z))dx+(1+\sin^2 x+V(z))dy+dz$, 
\begin{equation*}
    \quad V(z)=
\begin{cases}
e^{-1/z^2}\sin^2(1/z), & z\neq 0,\\
0, & z=0.
\end{cases}
\end{equation*}
consider the disjoint curves
$\gamma_{1,j}(s)=(s,0,a_j)$ and $\gamma_{2,j}(s)=(0,s,-a_j)$, where $a_j=(\pi j)^{-1}$.
Each pair is a strict local minimum of length, so the abelian Higgs $\Gamma$-limit \cite{Parise-Pigati-Stern2024a}
provides stable local minimizers concentrating on it as $\varepsilon\to0$.
A diagonal choice $\varepsilon_j\ll a_j$ yields a limiting pair of closed geodesics crossing transversely at the origin.
\end{remark}

\section{Preliminaries and the stability calculation}\label{sec: preliminaries}
We denote a typical section--real one-form pair by $\b\Phi=(\phi,\Phi)$, and reserve ${\bf U}=(u,\nabla)$ for the background solution. Throughout the remainder of the paper, $\dim M=3$ and $|u|\leq1$. Angled brackets denote the real fibre inner product, given in a unitary trivialisation by $\langle a,b\rangle=\Re(a\overline b)$. Locally we write $\nabla^A=d-iA$, where $A$ is a real one-form; then $F_\nabla=dA$. We also use $\nabla$ for the induced connections on tensor-valued sections and real tensors.
We use the convention that the Hodge Laplacian has nonpositive spectrum $\Delta\coloneqq-(dd^*+d^*d)$.  If $F$ is the skew-symmetric matrix representing $F_\nabla$ in an orthonormal frame, then
\begin{equation*}
|F_\nabla|^2=\frac12\sum_{i,j}F_{ij}^2,\qquad \|F\|^2=2|F_\nabla|^2,
\end{equation*}
where $\|\cdot\|$ denotes the Hilbert--Schmidt norm. 

\medskip

Set
\begin{equation*}
h(u)\coloneqq\frac12(1-|u|^2),\qquad j_\nabla u\coloneqq\langle\nabla u,iu\rangle.
\end{equation*}
When no confusion is possible, we write $h=h(u)$. The Euler--Lagrange equations of \eqref{eq: energy} are 
\begin{equation*}
\nabla^*\nabla u=\eps^{-2}h(u)u,\qquad \eps^2d^*F_\nabla=j_\nabla u.
\end{equation*}
By the assumption $|u|\leq1$, we have $0\leq h\leq\frac12$. Define the real two-form
\begin{equation*}
K(X,Y)\coloneqq-2\langle\nabla_Xu,i\nabla_Yu\rangle.
\end{equation*}
Direct differentiation and \cite[(2.7)]{Pigati-Stern2021} give
\begin{equation}\label{eq: current identities}
d(j_\nabla u)=K-|u|^2F_\nabla,\qquad d|u|^2\wedge j_\nabla u=|u|^2K,\qquad |K|\leq|\nabla u|^2.
\end{equation}
The scalar and curvature equations take the form \cite[(3.4), (2.6)]{Pigati-Stern2021}
\begin{equation}\label{eq: scalar curvature equations}
(-\eps^2\Delta+|u|^2)h=\eps^2|\nabla u|^2,\qquad (-\eps^2\Delta+|u|^2)F_\nabla=K.
\end{equation}

\subsection{Discrepancy, interior estimates and compactness}\label{sec: compactness}

Here we collect the interior estimates and compactness results needed below, mainly from \cite{Pigati-Stern2021}. Fix $\Omega\subset M^3$ and $\Omega'\Subset\Omega$, and consider smooth critical points with $|u|\leq1$ and $E_\eps(u,\nabla;\Omega)\leq\Lambda$. 

The key to the $(n-2)$-monotonicity of the energy, and consequent compactness, is the upper bound on the discrepancy 
\begin{equation}\label{eq: discrepancy definition}
\xi_\eps\coloneqq\eps|F_\nabla|-\eps^{-1}h(u).
\end{equation}
We write $\xi_\eps^\pm=\max\{\pm\xi_\eps,0\}$ for its positive and negative parts. The required upper bound was proved in \cite[Proposition 4.2]{Pigati-Stern2021}. We record the following interior versions of \cite[Proposition 4.2, Theorem 4.3 and Proposition 5.1]{Pigati-Stern2021}, together with the curvature derivative estimate.  For sufficiently small $\eps$, there are constants $C,r_0>0$, depending only on the domains, their geometry and $\Lambda$, such that on $\Omega'$
\begin{equation}\label{eq: PS discrepancy}
\xi_\eps\leq C,
\end{equation}
\begin{equation}\label{eq: pointwise}
|\nabla u|^2\leq C\eps^{-2}h^2+C\eps^2,\qquad \eps^2|F_\nabla|^2\leq C\eps^{-2}h^2+C\eps,
\end{equation}
\begin{equation}\label{eq: curvature derivative}
|\nabla F_\nabla|\leq C\eps^{-3},
\end{equation}
and
\begin{equation}\label{eq: local energy}
E_\eps(u,\nabla;B_r(x))\leq Cr\qquad (x\in\Omega',\ 0<r<r_0).
\end{equation}

\begin{proof}[Justification of the interior estimates]
All estimates are obtained on successive fixed domains compactly contained in $\Omega$. The curvature equation and the two-form Weitzenb\"ock identity give
\begin{equation*}
-\Delta F_\nabla=\nabla^*\nabla F_\nabla+\mathcal R_2F_\nabla,
\end{equation*}
where $\mathcal R_2$ is the curvature endomorphism of two-forms. The discrepancy inequality \cite[(3.7)]{Pigati-Stern2021} and the positive-part chain rule imply
\begin{equation*}
\Delta\xi_\eps^+\geq-C\eps|F_\nabla|.
\end{equation*}
The energy bounds both $\xi_\eps^+$ and $\eps|F_\nabla|$ in $L^2$. Since $2>3/2$, the interior elliptic estimate for subsolutions gives \eqref{eq: PS discrepancy}. Together with $h\leq1/2$, this yields $|F_\nabla|\leq C\eps^{-2}$ on smaller domains.

The inner-variation calculation in \cite[Section 4]{Pigati-Stern2021} is local, and its discrepancy term satisfies
\begin{equation*}
\eps^2|F_\nabla|^2-\eps^{-2}h^2
=\xi_\eps(\eps|F_\nabla|+\eps^{-1}h)\leq Ce_\eps^{1/2}.
\end{equation*}
The monotonicity argument of \cite[Theorem 4.3]{Pigati-Stern2021}, applied inside a fixed larger domain, therefore gives \eqref{eq: local energy}. The Bochner inequality \cite[(5.4)]{Pigati-Stern2021} implies $\Delta|\nabla u|^2\geq-C\eps^{-2}|\nabla u|^2$. The interior mean-value estimate on $B_\eps(x)$ and \eqref{eq: local energy} yield
\begin{equation*}
|\nabla u|^2(x)\leq C\eps^{-3}E_\eps(u,\nabla;B_\eps(x))\leq C\eps^{-2}.
\end{equation*}
On rescaled $\eps$-balls, $\eps^2F_\nabla$ and the source $\eps^2K$ are uniformly bounded. Interior $W^{2,q}$ estimates for the curvature equation, with $q>3$, then give \eqref{eq: curvature derivative}.

To localise the sharper bounds in \eqref{eq: pointwise}, set $v=|\nabla u|-2\eps^{-1}h$. The calculation in \cite[(5.5)--(5.6)]{Pigati-Stern2021} gives, on $\{v>0\}$ and for small $\eps$,
\begin{equation*}
\eps^2\Delta v\geq(1+3h-C\eps)v+2\eps v^2+2\eps^{-1}h^2-2Ch
\geq\tfrac12v-C\eps.
\end{equation*}
Thus $(v-C\eps)_+$ satisfies a screened subsolution inequality. Its coarse bound $C\eps^{-1}$ and comparison with exponential barriers on fixed normal balls give $v\leq C\eps$ on smaller domains, proving the gradient bound in \eqref{eq: pointwise}. Similarly, the calculation in \cite[proof of Proposition 5.1]{Pigati-Stern2021} shows that the positive part of
\begin{equation*}
\eps|F_\nabla|-(1+C\eps)\eps^{-1}h-C\sqrt\eps
\end{equation*}
satisfies a screened subsolution inequality at scale $\eps^{3/4}$. The same interior comparison, as in \cite[proof of Proposition 5.3]{Pigati-Stern2021}, gives
$\eps|F_\nabla|\leq(1+C\eps)\eps^{-1}h+C\sqrt\eps$ on smaller domains. Squaring proves the curvature bound in \eqref{eq: pointwise}.
\end{proof}

We also record the localised version of the compactness statement in dimension three.

\begin{theorem}[Pigati--Stern]\label{thm: PS21}
Let $\eps_j\to 0$ and let $(u_j,\nabla_j)$ be smooth critical points in $\Omega$ with $|u_j|\leq1$ and $E_{\eps_j}(u_j,\nabla_j;\Omega)\leq\Lambda$. After passing to a subsequence, there is a stationary integral one-varifold $V$ in $\Omega$ such that
\begin{equation*}
e_{\eps_j}(u_j,\nabla_j)\,d\vol_g\rightharpoonup\mu\coloneqq2\pi\|V\|
\end{equation*}
as locally finite measures. In particular, $\mu$ is singular with respect to volume. If $\tau$ is either unit orientation of its tangent line, defined $\mu$-almost everywhere, then
\begin{equation}\label{eq: stress convergence}
T_{\eps_j}\,d\vol_g\rightharpoonup\tau^\flat\otimes\tau^\flat\,\mu,\qquad T_\eps\coloneqq e_\eps g-2S-2\eps^2F^\top F,
\end{equation}
where $S(X,Y)\coloneqq\langle\nabla_Xu,\nabla_Yu\rangle$.
\end{theorem}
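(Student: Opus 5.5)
This is the localised Pigati--Stern compactness theorem, so I would follow \cite{Pigati-Stern2021} and check that every step uses only the interior estimates \eqref{eq: PS discrepancy}--\eqref{eq: local energy} on domains $\Omega'\Subset\Omega$. The argument has four steps: weak limits and the stress tensor, stationarity, density bounds and rectifiability, and integrality.

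First, the bound $E_{\eps_j}\le\Lambda$ and a diagonal argument over an exhaustion of $\Omega$ give a subsequence along which $e_{\eps_j}\,d\vol_g\rightharpoonup\mu$. The pointwise bound $|T_\eps|\le Ce_\eps$ lets me also pass to a matrix-valued limit $T_{\eps_j}\,d\vol_g\rightharpoonup\mathbf T$ with $\mathbf T\ll\mu$. The critical point equations give $\div T_\eps=0$, via the inner-variation identity. Hence $\mathbf T$ is divergence-free in the distributional sense. Next, \eqref{eq: local energy} gives $\mu(B_r(x))\le Cr$, so $\mu$ has upper one-dimensional density and is singular with respect to volume. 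The lower density bound $\mu(B_r(x))\ge c\,r$ on $\spt\mu$ comes from the $\eps$-regularity / clearing-out argument of \cite{Pigati-Stern2021}: if $E_\eps(B_r(x))\le\eta r$, then $|u|\ge1/2$ on $B_{r/2}(x)$ and the energy decays. This uses the pointwise bounds \eqref{eq: pointwise} together with the monotonicity formula, which is localised by the computation in the justification above. So $\mu$ is comparable to $\mathcal H^1\llcorner\spt\mu$.

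Second, I would identify $\mathbf T$. Where $h$ is small, $|F|$ and the potential are small, and $T_\eps$ splits into a part controlled by $\xi_\eps$ plus the term $e_\eps g-2S-2\eps^2F^\top F$. In an orthonormal frame adapted to $b_\eps$, that term has two eigenvalues near $0$ and one near $e_\eps$. This is where the vanishing of the discrepancy in the limit, in the sense of the monotonicity identity, is used. It shows that $\mathbf T/\mu$ is a.e.\ a nonnegative symmetric matrix with trace $1$ and eigenvalues in $[0,1]$. Together with $\div\mathbf T=0$ and the density bounds, the rectifiability criterion for stationary varifold-like objects (Allard's rectifiability theorem applied to the generalised varifold, as in Pigati--Stern) gives $\mathbf T=\tau^\flat\otimes\tau^\flat\mu$ with $\tau$ tangent to a rectifiable set. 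The varifold $V=(2\pi)^{-1}\mu$ with this tangent is then stationary, which is \eqref{eq: stress convergence}.

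The main obstacle is integrality, i.e.\ showing $\Theta^1(\mu,x)\in2\pi\mathbf N$. I would argue as Pigati--Stern do, by blowing up at a point where $V$ has a tangent line. In a thin tubular slab around that line, $\eps$-regularity makes the normal two-dimensional slices nearly self-dual vortices, with energy concentrated near finitely many zeros. The degree of $u/|u|$ on a circle in the slice is an integer $N$. The Bogomolny identity then gives slice energy $\ge2\pi|N|$ up to the discrepancy error. The matching upper bound comes from vortex energy quantisation for almost self-dual configurations with vanishing tangential energy. Passing to the limit yields density $2\pi N$ a.e. All of these are interior statements, so the localisation follows once the constants in \eqref{eq: pointwise} and \eqref{eq: local energy} are uniform on $\Omega'$, which the justification above provides.
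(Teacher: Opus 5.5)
Your top-level plan matches the paper's. Its proof is a citation of Pigati--Stern: once \eqref{eq: PS discrepancy}--\eqref{eq: local energy} hold on nested interior domains, their clearing-out, decay, compactness and integrality arguments localise. The one point the paper singles out is truncating the distance to the vortex set by the distance to the boundary of a larger domain in the decay step. However, the content you supply for Step 2 would not work for the critical points in the statement, which need not be stable.

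You read off $\mathbf T/\mu$ from two inputs: a pointwise claim that $T_\eps$ has eigenvalues near $(0,0,e_\eps)$ in a frame adapted to $b_\eps$, and vanishing of the discrepancy. For general critical points only $\xi_\eps^+$ is controlled. The $L^1$ vanishing of $\eps^{-1}\xi_\eps$ and the alignment $T_\eps\approx e_\eps\nu^\flat\otimes\nu^\flat$ are precisely \Cref{thm: discrepancy} and \Cref{prop: stress jacobian approximation}. Those results require stability and use the singularity of $\mu$ supplied by this very theorem, so using them here is also circular.

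What is available a priori is weaker:
\begin{itemize}
\item $S\ge0$ and $F^\top F\ge0$ give $T_\eps\le e_\eps g$;
\item $\tr T_\eps=e_\eps-2\xi_\eps(\eps|F_\nabla|+\eps^{-1}h)\ge e_\eps-2\xi_\eps^+(\eps|F_\nabla|+\eps^{-1}h)$, where the error tends to zero in $L^1_{\mathrm{loc}}$ by \eqref{eq: positive discrepancy} and Cauchy--Schwarz.
\end{itemize}
So $\mathbf T=A\mu$ with only $A\le I$ and $\tr A\ge1$; neither $A\ge0$ nor $\tr A=1$ is known in advance. Rectifiability and $A=\tau^\flat\otimes\tau^\flat$ then come from the Ambrosio--Soner rectifiability theorem for such generalised varifolds, using $\div\mathbf T=0$ and the lower density bound. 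Allard's theorem does not apply directly, since it needs a measure on the Grassmannian. Trace one and equipartition are outputs of this step, not inputs.

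The integrality sketch has a similar problem. $\eps$-regularity gives no self-duality. Nothing in the general theory prevents a slice from containing vortices of both signs: Pigati--Stern only show that the limit of the curvature currents has mass at most $\mu$. So the degree $N$ of $u/|u|$ on a circle in the slice does not determine the density, and an upper bound for ``almost self-dual'' slices fails for vortex--antivortex configurations. The integer has to be $\sum_i|N_i|$ over $\eps$-scale clusters, and you must show each cluster is nearly self-dual or anti-self-dual. That requires:
\begin{itemize}
\item the vanishing tangential energy supplied by the identified stress tensor;
\item blow-up at scale $\eps$ to planar finite-energy critical points;
\item Taubes' theorem that these solve the first-order equations, with slice integral of $e_\eps$ equal to $2\pi|N_i|$;
\item the exponential energy decay away from $\{|u|\le1/2\}$, to exclude energy between clusters.
\end{itemize}

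A smaller slip: $\mu(B_r(x))\le Cr$ does not imply singularity with respect to volume, since Lebesgue measure satisfies it. Singularity follows from the lower density bound, which puts $\mu$ on a set of locally finite $\mathcal H^1$-measure, or from $\mu=2\pi\|V\|$.
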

\begin{proof}
This is the interior form of \cite[Theorem 1.1 and Proposition 6.4]{Pigati-Stern2021}, with the normalisation \eqref{eq: energy}. Once the preceding interior estimates are available, the clearing-out, decay, compactness and integrality arguments in Sections 4--6 of that paper apply on successively smaller domains. In the decay argument, the distance to the vortex region is truncated by the distance to the boundary of a fixed larger interior domain. Exhaustion and a diagonal subsequence give the stated local convergence.
\end{proof}

\subsection{Scalar coercivity}
The following coercivity estimate follows from the local energy bound, which ensures that $|u|^2\geq1/2$ on a fixed positive proportion of every ball of radius $R\eps$, for a sufficiently large fixed $R$.

\begin{lemma}\label{lem: coercivity}
For every $\Omega'\Subset\Omega$, all sufficiently small $\eps$ and every real $z\in H^1_0(\Omega')$, extended by zero to $\Omega$,
\begin{equation*}
\int_\Omega z^2\,d\vol_g\leq C\int_\Omega\big(\eps^2|dz|^2+|u|^2z^2\big)\,d\vol_g.
\end{equation*}
\end{lemma}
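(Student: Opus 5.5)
The plan is to prove the inequality on balls of radius $R\eps$ and then sum over a covering. The key input is that $|u|^2\geq1/2$ on a definite fraction of each such ball. On a single ball, a Poincaré-type inequality with a ``good set'' controls $\int z^2$ by $\eps^2\int|dz|^2$ plus the integral of $z^2$ over the good set. On the good set $z^2\leq 2|u|^2z^2$, so that term is absorbed into the right-hand side.

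\textbf{Step 1 (positive proportion).} Let $\Omega'\Subset\Omega''\Subset\Omega$, fix $R$ large, and let $x\in\Omega''$. On $B=B_{R\eps}(x)$ we have $\{|u|^2<1/2\}=\{h>1/4\}$, and there $(1-|u|^2)^2\geq 1/4$. Hence
\begin{equation*}
\vol(\{|u|^2<1/2\}\cap B)\leq 16\eps^2 E_\eps(u,\nabla;B)\leq C\eps^2\cdot R\eps
\end{equation*}
by \eqref{eq: local energy}. Meanwhile $\vol(B)\geq cR^3\eps^3$ for small $\eps$, since the ball is normal. Choosing $R$ with $C/(cR^2)\leq1/2$, the good set $G=\{|u|^2\geq1/2\}\cap B$ occupies at least half of $B$. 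The constant $R$ depends only on $\Lambda$ and the geometry.

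\textbf{Step 2 (Poincaré with a large good set).} Rescale $B$ by $\eps$ to a ball of radius $R$. The rescaled metric is uniformly close to Euclidean, uniformly in $x$ and small $\eps$. For $w\in H^1(B_R)$ and a measurable $G$ with $|G|\geq\frac12|B_R|$, the standard inequality is
\begin{equation*}
\int_{B_R}w^2\leq C_R\Big(\int_{B_R}|dw|^2+\int_G w^2\Big).
\end{equation*}
It follows from the Poincaré inequality $\|w-\bar w\|_{L^2}\leq C\|dw\|_{L^2}$, combined with
\begin{equation*}
|G|\,\bar w^2\leq 2\int_G w^2+2\int_G|w-\bar w|^2.
\end{equation*}
Scaling back gives
\begin{equation*}
\int_B z^2\leq C\int_B\big(\eps^2|dz|^2+z^2\mathbf 1_G\big)\leq C\int_B\big(\eps^2|dz|^2+2|u|^2z^2\big).
\end{equation*}

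\textbf{Step 3 (covering).} Take a maximal $\eps$-separated set of centres $x_i\in\overline{\Omega'}$, so that the balls $B_{R\eps}(x_i)$ cover $\Omega'$ and lie in $\Omega''$ for small $\eps$. By volume comparison these balls have overlap bounded by a constant depending on $R$ and the geometry. Since $z$ vanishes outside $\Omega'$, summing the local inequalities gives the lemma.

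The main obstacle is Step 1, namely getting a proportion bound that is uniform in $x$ and $\eps$. It rests on the linear energy growth \eqref{eq: local energy} at scale $R\eps<r_0$, applied at centres in $\Omega''$ where that estimate holds. Given this, Step 2 only needs uniformity of the Poincaré constant over a compact family of near-Euclidean metrics and sets of half measure. That uniformity is standard, for instance by a compactness-contradiction argument.
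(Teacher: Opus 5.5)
Your proposal is correct and follows the paper's proof essentially step for step. You obtain a half-measure good set $\{|u|^2\geq\frac12\}\cap B_{R\eps}(x)$ from the potential term and the linear energy growth \eqref{eq: local energy}, control the mean in the Poincar\'e inequality on that set using $1\leq 2|u|^2$, and finish with a bounded-overlap covering at scale $R\eps$. One harmless slip: because of the factor $\frac12$ in $E_\eps$, the constant in Step 1 should be $32$ rather than $16$.
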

\begin{proof}
On $\{|u|^2<\tfrac12\}$ the potential energy density is bounded below by $\frac1{16}\eps^{-2}$. For a fixed $R>1$, \eqref{eq: local energy} therefore gives
\begin{equation*}
\vol_g\big(B_{R\eps}(x)\cap\{|u|^2<1/2\}\big)\leq C\eps^2 E_\eps(u,\nabla;B_{R\eps}(x)) \leq CR\eps^3.
\end{equation*}
On the other hand we have the volume lower bound $\vol_g(B_{R\eps}(x))\geq cR^3\eps^3$, for some $c>0$. Thus, we can pick $R$ such that, denoting $G\coloneqq B_{R\eps}(x)\cap\{|u|^2\geq\tfrac12\}$ and $B\coloneqq B_{R\eps}(x)$, we have 
\begin{equation}\label{eq: volume G}
    \vol_g(G)\geq \frac12\vol_g(B)
\end{equation}
uniformly in $\eps$ and $x\in\Omega'$.

Let $z_B$ be the average of $z$ on $B$. The lower bound \eqref{eq: volume G} implies that 
\begin{equation*}
    \vol_g(B)|z_B|^2\leq 2\vol_g(G)|z_B|^2\leq 4\int_G z^2+4\int_G|z-z_B|^2
\end{equation*}
Thus, by Poincar\'e's inequality and the fact that $1\leq 2|u|^2$ on $G$,
\begin{equation*}
    \begin{split}
        \int_{B}z^2 d\vol_g&=\int_B|z-z_B|^2+ \vol_g(B)|z_B|^2\\
        &\leq 5\int_B|z-z_B|^2+4\int_G|z|^2\\
        &\leq CR^2\eps^2\int_B|dz|^2+C\int_B |u|^2|z|^2
    \end{split}
\end{equation*}
Since $R$ is fixed and uniform in $\eps$, it can be absorbed in the constant. A covering argument concludes the proof.
\end{proof}

\subsection{Stability and covariant test pairs}

Here we adapt the stability tests used in \cite{Badran2026b} to the manifold setting, which leaves a geometrically controlled error.

Recall that the second variation of \eqref{eq: energy} is
\begin{equation*}
\begin{split}
Q_{\eps,{\bf U}}[\b\Phi]=\int_\Omega\big(&|\nabla\phi-i\Phi u|^2-2\langle\nabla u,i\Phi\phi\rangle+\eps^2|d\Phi|^2\\
&+\eps^{-2}\langle u,\phi\rangle^2-\eps^{-2}h(u)|\phi|^2\big)\,d\vol_g.
\end{split}
\end{equation*}
Stability of ${\bf U}$ means $Q_{\eps,{\bf U}}[\b\Phi]\geq0$ for every smooth compactly supported pair $\b\Phi$. It is natural to consider the gauge-corrected form
\begin{equation*}
\begin{split}
\Q_{\eps,{\bf U}}[\b\Phi]\coloneqq\int_\Omega\big(&|\nabla\phi|^2+\eps^2|d\Phi|^2+\eps^2|d^*\Phi|^2+4\langle i\Phi\cdot\nabla u,\phi\rangle\\
&+\eps^{-2}(|u|^2-h(u))|\phi|^2+|u|^2|\Phi|^2\big)\,d\vol_g,
\end{split}
\end{equation*}
which satisfies the exact identity
\begin{equation}\label{eq: gauge correction}
\begin{split}
\Q_{\eps,{\bf U}}[\b\Phi]&=Q_{\eps,{\bf U}}[\b\Phi]+\eps^2\int_\Omega G_{\eps,{\bf U}}[\b\Phi]^2\,d\vol_g,\\
G_{\eps,{\bf U}}[\b\Phi]&\coloneqq d^*\Phi+\eps^{-2}\langle iu,\phi\rangle.
\end{split}
\end{equation}

We denote the bilinear form associated to $Q_{\eps,{\bf U}}$ by $\mathcal B_{\eps,{\bf U}}$ and the gauge-corrected linearised operator by $L_{\eps,{\bf U}}$, as in \cite{Badran2026b}. They are related by
\begin{equation*}
\begin{split}
\int_\Omega L_{\eps,{\bf U}}[\b\Phi]\cdot\b\Psi\,d\vol_g={}&\mathcal B_{\eps,{\bf U}}[\b\Phi,\b\Psi]+\eps^2\int_\Omega G_{\eps,{\bf U}}[\b\Phi]G_{\eps,{\bf U}}[\b\Psi]\,d\vol_g.
\end{split}
\end{equation*}
All the formulas above extend to compactly supported $H^1$ pairs by density. For the rest of the proof we omit $\eps,{\bf U}$ from $Q,\Q,\mathcal B,G$ and $L$.

On a neighbourhood of the compact supports under consideration, choose a smooth isometric embedding $\iota$ into $\R^N$ and set $X_a=\nabla\iota^a$. The projected coordinate fields satisfy
\begin{equation}\label{eq: projected fields}
\sum_aX_a\otimes X_a=g^{-1},\qquad \sum_aX_a\otimes\nabla_YX_a=0
\end{equation}
for every vector field $Y$. The first identity expresses isometry. For the second, let $E_a$ be the standard basis of $\R^N$ and $\mathrm{II}$ the second fundamental form of $\iota$. For vector fields $Z,W$,
\begin{equation*}
\langle X_a,Z\rangle=\langle d\iota(Z),E_a\rangle,\qquad \langle\nabla_YX_a,W\rangle=\nabla^2\iota^a(Y,W)=\langle\mathrm{II}(Y,W),E_a\rangle,
\end{equation*}
and hence
\begin{equation*}
\sum_a\langle X_a,Z\rangle\langle\nabla_YX_a,W\rangle=\langle d\iota(Z),\mathrm{II}(Y,W)\rangle=0.
\end{equation*}
Define the fixed symmetric tensor $H\in\Gamma(\operatorname{Sym}^2T^*M)$ by
\begin{equation*}
H_{ij}\coloneqq\sum_{a,k}\langle\nabla_{e_k}X_a,e_i\rangle\langle\nabla_{e_k}X_a,e_j\rangle.
\end{equation*}

\begin{lemma}\label{lem: tensor identity}
For every compactly supported $T\in H^1(\Omega;T^*M\otimes T^*M)$,
\begin{equation}\label{eq: tensor identity}
\begin{split}
\sum_a\int_\Omega\big(|d(T(X_a,\cdot))|^2&+|d^*(T(X_a,\cdot))|^2\big)\,d\vol_g=\int_\Omega|\nabla T|^2\,d\vol_g\\
&+\int_\Omega\sum_i H\big(T(\cdot,e_i)^\sharp,T(\cdot,e_i)^\sharp\big)\,d\vol_g\\
&+\int_\Omega\sum_i\Ric\big(T(e_i,\cdot)^\sharp,T(e_i,\cdot)^\sharp\big)\,d\vol_g,
\end{split}
\end{equation}
where $\{e_i\}$ is any local orthonormal frame.
\end{lemma}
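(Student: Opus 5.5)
The identity follows from the Bochner--Weitzenb\"ock formula for one-forms, applied to each $\omega_a\coloneqq T(X_a,\cdot)$ and summed over $a$; the cross terms vanish because of the second identity in \eqref{eq: projected fields}.

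For a compactly supported $H^1$ one-form $\omega$ (by density we may assume $T$ smooth), integrating the Weitzenb\"ock identity $-\Delta\omega=\nabla^*\nabla\omega+\Ric(\omega^\sharp)^\flat$ against $\omega$ gives
\begin{equation*}
\int_\Omega\big(|d\omega|^2+|d^*\omega|^2\big)\,d\vol_g=\int_\Omega|\nabla\omega|^2\,d\vol_g+\int_\Omega\Ric(\omega^\sharp,\omega^\sharp)\,d\vol_g .
\end{equation*}
Apply this with $\omega=\omega_a$ and sum over $a$. Both the left-hand side and the $\Ric$ term need no further manipulation; it remains to expand $\sum_a|\nabla\omega_a|^2$ and $\sum_a\Ric(\omega_a^\sharp,\omega_a^\sharp)$ pointwise.

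\textbf{The Ricci term.} Write $\omega_a^\sharp=\sum_j T(X_a,e_j)e_j$ and expand $X_a=\sum_i\langle X_a,e_i\rangle e_i$ in the first slot. The first identity in \eqref{eq: projected fields}, $\sum_a\langle X_a,e_i\rangle\langle X_a,e_k\rangle=\delta_{ik}$, gives
\begin{equation*}
\sum_a\Ric(\omega_a^\sharp,\omega_a^\sharp)=\sum_i\Ric\big(T(e_i,\cdot)^\sharp,T(e_i,\cdot)^\sharp\big).
\end{equation*}

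\textbf{The gradient term.} By the Leibniz rule, $(\nabla_{e_k}\omega_a)(e_j)=(\nabla_{e_k}T)(X_a,e_j)+T(\nabla_{e_k}X_a,e_j)$. Squaring and summing over $a,j,k$ produces three contributions:
\begin{enumerate}
\item The first squared term gives $|\nabla T|^2$, again by the first identity in \eqref{eq: projected fields}.
\item The cross term $2\sum_{a,j,k}(\nabla_{e_k}T)(X_a,e_j)\,T(\nabla_{e_k}X_a,e_j)$ is bilinear in $\langle X_a,e_i\rangle$ and $\langle\nabla_{e_k}X_a,e_l\rangle$. It therefore vanishes by the second identity $\sum_aX_a\otimes\nabla_YX_a=0$ with $Y=e_k$.
\item The last squared term is
\begin{equation*}
\sum_{a,j,k}\Big(\sum_i\langle\nabla_{e_k}X_a,e_i\rangle T(e_i,e_j)\Big)^2=\sum_{i,l,j}H_{il}\,T(e_i,e_j)T(e_l,e_j),
\end{equation*}
using the definition of $H$. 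This equals $\sum_jH\big(T(\cdot,e_j)^\sharp,T(\cdot,e_j)^\sharp\big)$.
\end{enumerate}

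Adding the three contributions and the Ricci term yields \eqref{eq: tensor identity}.

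The only delicate points are bookkeeping. First, the Weitzenb\"ock sign must match the convention $\Delta=-(dd^*+d^*d)$ and the normalisation $|d\omega|^2$ for two-forms, which the paper also uses for $|F_\nabla|^2$. Second, one must check which slot of $T$ is contracted with $X_a$: this determines why the $H$ term involves $T(\cdot,e_i)$ while the $\Ric$ term involves $T(e_i,\cdot)$. I expect no genuine obstacle. The key observation is the vanishing of the cross term via \eqref{eq: projected fields}, and the extension from smooth $T$ to $H^1$ is by density, since all terms are continuous in $H^1$.
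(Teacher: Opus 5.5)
Your proposal is correct and follows the paper's proof essentially verbatim. In both, one integrates the one-form Weitzenb\"ock identity for $t_a=T(X_a,\cdot)$ and expands $\nabla t_a=(\nabla T)(X_a,\cdot)+T(\nabla X_a,\cdot)$. The mixed term is killed by the second identity in \eqref{eq: projected fields}, the first identity gives the $|\nabla T|^2$ and $\Ric$ terms, and density handles $H^1$; your index bookkeeping for the $H$ and $\Ric$ contributions, including which slot of $T$ appears in each, checks out.
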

\begin{proof}
For smooth compactly supported $T$, set $t_a=T(X_a,\cdot)$. The one-form Weitzenb\"ock identity \cite[Theorem 9.4.1]{Petersen2016}, integrated over $\Omega$, gives
\begin{equation*}
\int_\Omega\big(|dt_a|^2+|d^*t_a|^2\big)\,d\vol_g=\int_\Omega\big(|\nabla t_a|^2+\Ric(t_a^\sharp,t_a^\sharp)\big)\,d\vol_g.
\end{equation*}
Expand $\nabla t_a=(\nabla T)(X_a,\cdot)+T(\nabla X_a,\cdot)$ and sum in $a$. The mixed term vanishes by \eqref{eq: projected fields}, so
\begin{equation*}
\begin{split}
\sum_a|\nabla t_a|^2&=|\nabla T|^2+\sum_i H\big(T(\cdot,e_i)^\sharp,T(\cdot,e_i)^\sharp\big),\\
\sum_a\Ric(t_a^\sharp,t_a^\sharp)&=\sum_i\Ric\big(T(e_i,\cdot)^\sharp,T(e_i,\cdot)^\sharp\big).
\end{split}
\end{equation*}
Approximation gives the identity for $H^1$ tensors.
\end{proof}
Now, set
\begin{equation*}
b\coloneqq(*F_\nabla)^\sharp,\qquad \nu\coloneqq\frac{b}{|b|}\quad\text{on }\{|F_\nabla|>0\},\qquad B\coloneqq\sqrt{-F^2}.
\end{equation*}
In dimension three,
\begin{equation}\label{eq: B identities}
B=|F_\nabla|(g-\nu^\flat\otimes\nu^\flat),\qquad FF^\top=F^\top F=B^2,\qquad \|\nabla F\|^2=\|\nabla B\|^2.
\end{equation}
We identify $B$ with the symmetric bilinear form obtained by lowering an index with $g$, and write $B^2(X,Y)=\langle B(X,\cdot),B(Y,\cdot)\rangle$. We extend $B$ by zero on $\{|F_\nabla|=0\}$. The map $b\mapsto |b|I-b\otimes \frac{b}{|b|}$, with this extension, is Lipschitz, so $B\in H^1_{\mathrm{loc}}(\Omega;\operatorname{Sym}^2T^*M)$.

We write $\dagger$ for the conjugate transpose in an orthonormal frame. As in \cite{Badran2026b}, consider the pairs
\begin{equation*}
{\bf V}_a\coloneqq(\nabla_{X_a}u,\iota_{X_a}F_\nabla),\qquad {\bf W}_a\coloneqq(i\nabla_{X_a}u,-B(X_a,\cdot)).
\end{equation*}

\begin{proposition}\label{prop: comparison}
For every real $\zeta\in C_c^\infty(\Omega)$,
\begin{equation}\label{eq: comparison}
\sum_a\Q[\zeta{\bf W}_a]+4\int_\Omega\zeta^2(\nabla u)^\dagger(B+iF)(\nabla u)\,d\vol_g=\sum_a\Q[\zeta{\bf V}_a].
\end{equation}
Moreover, the Hermitian endomorphism $B+iF$ is positive semidefinite, thus if the critical point is stable in $\Omega$, then
\begin{equation}\label{eq: test bound}
0\leq\sum_a\Q[\zeta{\bf W}_a]\leq C\int_\Omega(\zeta^2+|d\zeta|^2)e_\eps(u,\nabla)\,d\vol_g.
\end{equation}
\end{proposition}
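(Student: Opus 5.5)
The plan is to prove the identity \eqref{eq: comparison} by expanding both sides term by term and summing over $a$, then deduce \eqref{eq: test bound} from stability, positivity of $B+iF$, and a cutoff computation for $\Q[\zeta{\bf V}_a]$.

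\textbf{Step 1: the comparison identity.} First I expand $\Q[\zeta{\bf V}_a]$ and $\Q[\zeta{\bf W}_a]$ using the definition of $\Q$. The two section components differ only by multiplication by $i$, so:
\begin{itemize}
\item the terms $|\nabla\phi|^2$ and $\eps^{-2}(|u|^2-h)|\phi|^2$ coincide;
\item the cross terms $4\langle i\Phi\cdot\nabla u,\phi\rangle$ differ.
\end{itemize}
For the one-form components I apply \Cref{lem: tensor identity} twice, with $T=\zeta F_\nabla$ and with $T=-\zeta B$. Both are $H^1$ and compactly supported; for $B$ this uses the Lipschitz extension noted above.
\begin{itemize}
\item The gradient terms agree, since $\|\nabla F\|^2=\|\nabla B\|^2$ by \eqref{eq: B identities} and the $d\zeta$ contributions have the same structure.
\item The $H$ and $\Ric$ terms are quadratic in $T$ and depend only on $FF^\top=F^\top F=B^2$. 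Since $F$ is skew and $B$ is symmetric, $\sum_i H(T(\cdot,e_i)^\sharp,T(\cdot,e_i)^\sharp)=\tr(H\,TT^\top)$ is the same for both, and the $\Ric$ terms likewise.
\item The mass terms $|u|^2|\Phi|^2$ agree after summing in $a$ via $\sum_aX_a\otimes X_a=g^{-1}$, because $\sum_a|F(X_a,\cdot)|^2=\|F\|^2=\|B\|^2$.
\end{itemize}
What remains is the difference of the cross terms. Summed over $a$ with \eqref{eq: projected fields}, these read $4\zeta^2\sum_{j,k}\langle iF_{jk}\nabla_ku,\nabla_ju\rangle$ versus $4\zeta^2\sum_{j,k}\langle -iB_{jk}\nabla_k u, i\nabla_j u\rangle$. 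In complex notation this is exactly $4\zeta^2(\nabla u)^\dagger(B+iF)(\nabla u)$, and a careful sign bookkeeping should give \eqref{eq: comparison}.

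\textbf{Step 2: positivity of $B+iF$.} Work in an orthonormal frame $(\nu,e_2,e_3)$. On $\mathrm{span}\{e_2,e_3\}$, $F$ acts as $|F_\nabla|$ times a rotation $J$ and $B=|F_\nabla|\,I$; both vanish on $\nu$. Then $iF$ has eigenvalues $\pm|F_\nabla|$ on that plane and commutes with $B$, so $B+iF$ has eigenvalues $2|F_\nabla|,0,0$. On $\{F=0\}$ it is $0$. Hence $B+iF\geq0$.

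\textbf{Step 3: the two-sided bound \eqref{eq: test bound}.} Stability gives $Q\geq0$, and \eqref{eq: gauge correction} gives $\Q\geq Q$, so $\Q[\zeta{\bf W}_a]\geq0$. The identity and positivity then give $\sum_a\Q[\zeta{\bf W}_a]\leq\sum_a\Q[\zeta{\bf V}_a]$.

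For the upper bound I use the cutoff formula
\[
\Q[\zeta\b\Phi]=\int\zeta^2 L[\b\Phi]\cdot\b\Phi+\int|d\zeta|^2\big(|\phi|^2+\eps^2|\Phi|^2\big).
\]
This holds because $\Q$ is a Laplace-type form: $\nabla^*\nabla$ on $\phi$ and $\eps^2(dd^*+d^*d)$ on $\Phi$. The $|d\zeta|^2$ term, summed in $a$, is bounded by $C|d\zeta|^2e_\eps$.

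The main point is that ${\bf V}_a$ is the gauge-covariant infinitesimal translation along $X_a$. So $L[{\bf V}_a]$ is obtained by commuting $\nabla_{X_a}$ through the Euler--Lagrange equations. The error consists of:
\begin{itemize}
\item terms with $\nabla X_a$ multiplying first derivatives of $(u,F)$;
\item terms with $\nabla^2X_a$ or $\Ric$ multiplying $(\nabla u,F)$.
\end{itemize}

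\textbf{Main obstacle.} I expect the hardest step to be controlling $\sum_a\int\zeta^2L[{\bf V}_a]\cdot{\bf V}_a$ by $C\int\zeta^2e_\eps$, with no $\eps^{-1}$ losses. Pairing the first-order error terms against ${\bf V}_a$ and summing produces $\sum_aX_a\otimes\nabla X_a$, which vanishes by \eqref{eq: projected fields}. This cancellation is what removes the dangerous terms $\nabla^2u\cdot\nabla u$ and $\eps^2\nabla F\cdot F$. The remaining terms are zeroth order in the geometry and quadratic in $(\nabla u,\eps F)$, so they are bounded by $Ce_\eps$.

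The cross term $4\langle i\Phi\cdot\nabla u,\phi\rangle$ and the potential term need checking by hand. If a direct commutator computation becomes unwieldy, I would fall back on computing $\sum_a\Q[\zeta{\bf V}_a]$ directly. In that route I would integrate by parts the second-order terms using the equations \eqref{eq: scalar curvature equations} and the identities \eqref{eq: current identities}, applying the same projected-field cancellation.
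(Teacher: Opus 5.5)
Your proposal is correct. For the identity \eqref{eq: comparison} and the positivity of $B+iF$ you do what the paper does:
\begin{itemize}
\item apply \Cref{lem: tensor identity} to $\zeta F_\nabla$ and $\zeta B$;
\item use \eqref{eq: B identities} to match the norm, derivative, $H$ and $\Ric$ contributions;
\item let the mixed terms produce $4\zeta^2(\nabla u)^\dagger(B+iF)(\nabla u)$.
\end{itemize}
The sign bookkeeping you defer does close: the real part of $\overline{\nabla_ju}\,(B_{jk}+iF_{jk})\nabla_ku$ is $B_{jk}\langle\nabla_ku,\nabla_ju\rangle+F_{jk}\langle i\nabla_ku,\nabla_ju\rangle$, which is exactly the difference of the two cross terms.

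Where you diverge is the upper bound for $\sum_a\Q[\zeta{\bf V}_a]$.
\begin{itemize}
\item \emph{The paper} expands $\Q[\zeta{\bf V}_a]$ directly and quotes the Pigati--Stern Bochner identities for $|\nabla u|^2$ and $|F_\nabla|^2$, in which the $\langle F_\nabla,K\rangle$ terms cancel. It then integrates the mixed cutoff term by parts against the resulting Laplacian, arriving at an exact formula in terms of $H$, $\Ric$ and $\mathcal R_2$.
\item \emph{You} use the localisation identity for the Laplace-type form $\Q$ and treat ${\bf V}_a$ as approximate Jacobi fields, so only commutator errors survive in $\sum_aL[{\bf V}_a]\cdot{\bf V}_a$. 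Anything bilinear in $\nabla X_a$ and $X_a$ dies by $\sum_aX_a\otimes\nabla X_a=0$. The remaining terms involve $\sum_aX_a\otimes\nabla^2X_a$ (a fixed bounded tensor) or ambient curvature, so they are bounded by $Ce_\eps$.
\end{itemize}
This is the same cancellation in different packaging: your localisation formula is precisely the paper's integration by parts of the cutoff cross term. The paper's version gives an explicit identity. Yours is more conceptual, avoids quoting the Bochner formulas, and yields only the bound, which is all \eqref{eq: test bound} needs.

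The step you flag as needing hand-checking closes as follows. $L[{\bf V}_X](p)$ depends linearly on the $2$-jet of $X$ at $p$. Its dependence on $X(p)$ alone may therefore be computed with $\nabla X(p)=0=\nabla^2X(p)$. There it reduces to the flat computation, in which the gauge-covariant translation is an exact Jacobi field with $G=0$, plus Riemann-curvature commutator terms. The gauge-correction part is harmless because, by $\eps^2d^*F_\nabla=j_\nabla u$, $G[{\bf V}_a]$ is a contraction of $F_\nabla$ with $\nabla X_a$ alone.
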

\begin{proof}
Apply the calculations of \cite[Section 3.1, equations (3.5)--(3.8)]{Badran2026b} and \Cref{lem: tensor identity} to the compactly supported covariant tensors $\zeta F_\nabla$ and $\zeta B$. By \eqref{eq: B identities}, they have the same norms, derivative norms and geometric contributions. The difference of the mixed terms is $-4\zeta^2(\nabla u)^\dagger(B+iF)(\nabla u)$, proving \eqref{eq: comparison}.

To estimate the right-hand side, use the Bochner calculations for $\nabla u$ and $F_\nabla$ in \cite[Section 3, equation (3.1) and the calculation following (3.4)]{Pigati-Stern2021}. The terms involving $\langle F_\nabla,K\rangle$ cancel. The scalar Laplacian term is
\begin{equation*}
\frac12\int_\Omega\zeta^2\Delta\big(|\nabla u|^2+2\eps^2|F_\nabla|^2\big)\,d\vol_g,
\end{equation*}
which cancels the mixed cutoff term after integration by parts. Consequently,
\begin{equation*}
\begin{split}
\sum_a\Q[\zeta{\bf V}_a]={}&\int_\Omega\zeta^2\big(\langle H-\Ric,S\rangle+\eps^2\langle H+\Ric,B^2\rangle\\
&\hspace{5em}-2\eps^2\langle\mathcal R_2F_\nabla,F_\nabla\rangle\big)\,d\vol_g\\
&+\int_\Omega|d\zeta|^2\big(|\nabla u|^2+2\eps^2|F_\nabla|^2\big)\,d\vol_g.
\end{split}
\end{equation*}
Finally, $B+iF=|iF|+iF$ is positive semidefinite. Stability for the compactly supported pairs $\zeta{\bf W}_a$ and boundedness of the fixed geometric tensors give \eqref{eq: test bound}.
\end{proof}

\section{The discrepancy bound}\label{sec: discrepancy}

Recall Pigati--Stern's discrepancy $\xi_\eps$ from \eqref{eq: discrepancy definition}. In the notation of \cite[Section 3.1]{Badran2026b},
\begin{equation*}
-\eps\xi_\eps=h(u)-\frac{\eps^2}{2}\operatorname{tr}B=h(u)-\eps^2|F_\nabla|.
\end{equation*}
The pointwise estimate \eqref{eq: PS discrepancy} from \cite{Pigati-Stern2021} controls the positive part $\xi_\eps^+$. Here we show that in dimension three \emph{stability} controls the negative part $\xi_\eps^-$, including its derivative, in an integral way. 

\begin{theorem}[Interior discrepancy estimate]\label{thm: discrepancy}
For every smooth critical point in $\Omega$ with $|u|\leq1$ and $E_\eps(u,\nabla;\Omega)\leq\Lambda$, every $\Omega'\Subset\Omega$, and all sufficiently small $\eps$,
\begin{equation}\label{eq: positive discrepancy}
\|\eps^{-1}\xi_\eps^+\|_{L^2(\Omega')}+\|d\xi_\eps^+\|_{L^2(\Omega')}\leq C\eps.
\end{equation}
If the critical point is stable with respect to compactly supported variations in $\Omega$, then
\begin{equation}\label{eq: negative discrepancy}
\|\eps^{-1}\xi_\eps\|_{L^2(\Omega')}+\|d\xi_\eps\|_{L^2(\Omega')}\leq C.
\end{equation}
Moreover, for a sequence of stable critical points whose energy measures converge locally to $\mu$ as in \Cref{thm: PS21},
\begin{equation}\label{eq: strong scalar}
\|\eps^{-1}\xi_\eps\|_{L^1(\Omega')}\longrightarrow0,
\end{equation}
and, as locally finite measures in $\Omega$,
\begin{equation}\label{eq: scalar mass}
\eps^{-2}h\,d\vol_g\rightharpoonup\mu,\qquad |F_\nabla|\,d\vol_g\rightharpoonup\mu.
\end{equation}
\end{theorem}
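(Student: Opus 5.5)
We treat the three assertions in turn; the stability input enters only through \eqref{eq: test bound}.

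\emph{Positive part.} By \eqref{eq: pointwise}, on $\Omega'$ we have $\eps|F_\nabla|\leq\big(\eps^{-2}h^2C'+C\eps\big)^{1/2}$, and the sharper localized form from the justification, $\eps|F_\nabla|\leq(1+C\eps)\eps^{-1}h+C\sqrt\eps$, gives
\[
\xi_\eps^+\leq C\eps\cdot\eps^{-1}h+C\sqrt\eps .
\]
This alone gives only $\|\eps^{-1}\xi_\eps^+\|_{L^2}\lesssim\eps^{-1/2}$, so we need the subsolution structure. From the discrepancy inequality \cite[(3.7)]{Pigati-Stern2021} we expect a screened inequality of the form
\[
\eps^2\Delta\xi_\eps^+\geq|u|^2\xi_\eps^+-C\eps^2\big(\eps|F_\nabla|+\eps^{-1}h\big).
\]
We multiply by $\zeta^2\xi_\eps^+$ and integrate by parts. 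Using $\xi_\eps^+\leq C$, the local energy bound \eqref{eq: local energy} and \Cref{lem: coercivity} applied to $z=\zeta\xi_\eps^+$, this yields
\[
\int\zeta^2\big(\eps^2|d\xi^+|^2+(\xi^+)^2\big)\leq C\eps^2\int\zeta\,\xi^+e_\eps^{1/2}+\dots,
\]
and absorption gives \eqref{eq: positive discrepancy}. If the precise form of \cite[(3.7)]{Pigati-Stern2021} leaves a worse source term, we fall back on the pointwise bound for whatever power is needed.

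\emph{Stability bound.} We expand $\sum_a\Q[\zeta{\bf W}_a]$ using \Cref{lem: tensor identity} applied to $\zeta B$. The key algebraic step, taken from \cite[Section 3.1]{Badran2026b}, is that after integrating the mixed term $4\langle i\Phi\cdot\nabla u,\phi\rangle$ with $\Phi=-\zeta B(X_a,\cdot)$ and $\phi=i\zeta\nabla_{X_a}u$, and then using the scalar equation \eqref{eq: scalar curvature equations} for $h$, the form becomes a sum of nonnegative terms. Among these are
\[
\eps^2\zeta^2|d\operatorname{tr}B|^2\text{-type terms}\quad\text{and}\quad \zeta^2|u|^2\eps^{-2}\,|h-\tfrac{\eps^2}{2}\operatorname{tr}B|^2 .
\]
In other words, we aim to express $\sum_a\Q[\zeta{\bf W}_a]$ as
\[
\int\zeta^2\big(|d\xi_\eps|^2+|u|^2\eps^{-2}\xi_\eps^2\big)+\int\zeta^2\,\eps^2|F_\nabla|^2|\nabla_\nu\nu|^2\cdot c+(\text{nonnegative})+\mathcal E ,
\]
where the error satisfies $|\mathcal E|\leq C\int(\zeta^2+|d\zeta|^2)e_\eps$. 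The error collects the $H$ and $\Ric$ terms, which are bounded by $|B|^2\eps^2+|\nabla u|^2$, together with the cutoff cross terms handled by Cauchy--Schwarz. Combining this with \eqref{eq: test bound} bounds $\int\zeta^2(|d\xi|^2+|u|^2\eps^{-2}\xi^2)$ by $C\Lambda$. \Cref{lem: coercivity} applied to $z=\zeta\xi_\eps/\eps$ then converts $|u|^2$-weighted control into full $L^2$ control of $\eps^{-1}\xi_\eps$, because $\eps^2|d(\zeta\xi/\eps)|^2=|d(\zeta\xi)|^2$ is already bounded. This gives \eqref{eq: negative discrepancy}. We expect this step to be the main obstacle: the exact bookkeeping that turns $\Q[\zeta{\bf W}_a]$ into $|d\xi|^2+\eps^{-2}|u|^2\xi^2$ modulo controlled errors, in particular verifying that no term of size $\eps^{-2}\int h\,|\xi|$ without a sign survives.

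\emph{Limits.} Testing the scalar equation $(-\eps^2\Delta+|u|^2)h=\eps^2|\nabla u|^2$ against $\eps^{-2}\varphi$ gives
\[
\int\eps^{-2}|u|^2h\varphi=\int|\nabla u|^2\varphi+\int h\Delta\varphi .
\]
The last term tends to $0$. Next, $\eps^{-2}(1-|u|^2)h=2\eps^{-2}h^2$ is the potential density, and $|\nabla u|^2$ is compared with $\eps^{-2}h^2$ through equipartition. Equipartition itself follows from $\xi_\eps\to0$ together with \Cref{thm: PS21}: the traces of $T_\eps$ give $\tfrac12|\nabla u|^2$ asymptotically equal to the rest of the energy. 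Along the way, $\eps^{-2}h\,d\vol_g\rightharpoonup\mu$.

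For \eqref{eq: strong scalar}, let $f$ be a weak $L^2$ limit of $\eps^{-1}\xi_\eps^-$. Since $\eps^{-1}\xi^-\leq\eps^{-2}h$, we get $f\,d\vol\leq\lim\eps^{-2}h\,d\vol$, which is a measure carried by the one-dimensional support of $\mu$; this set has volume zero. Hence $f=0$, and since the sequence is nonnegative, $\int\eps^{-1}\xi^-\to0$. The positive part is $O(\eps)$ by \eqref{eq: positive discrepancy}. Finally,
\[
|F_\nabla|=\eps^{-2}h+\eps^{-1}\xi_\eps
\]
transfers the limit from $\eps^{-2}h$ to $|F_\nabla|$. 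To keep the argument non-circular, we first obtain $\eps^{-2}h\,d\vol_g\rightharpoonup$ some measure $\le C\mu$, and only then upgrade to the exact identification.
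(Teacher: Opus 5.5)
\textbf{The gap is in the stability step.} This step is the core of the theorem, and your proof of \eqref{eq: strong scalar} depends on it: the domination $\eps^{-1}\xi_\eps^-\le\eps^{-2}h$ by itself gives only $L^1$ bounds, and their weak limits may be singular. You propose to rearrange $\sum_a\Q[\zeta{\bf W}_a]$ into $\int\zeta^2(|d\xi_\eps|^2+\eps^{-2}|u|^2\xi_\eps^2)$ plus a separate square $c\,\eps^2|F_\nabla|^2|\nabla_\nu\nu|^2$ plus nonnegative terms. This is never derived, and the algebra does not produce it.

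The only square you can actually extract is the gauge term. By \eqref{eq: gauge correction} and stability, $\Q[\zeta{\bf W}_a]\ge\eps^2\int G[\zeta{\bf W}_a]^2$, and $\eps^2\sum_aG[{\bf W}_a]^2\ge|d\xi_\eps-\eps|F_\nabla|(\nabla_\nu\nu)^\flat|^2$. This mixes $d\xi_\eps$ with the bending term, so it bounds neither separately, and it contains no $\eps^{-2}|u|^2\xi_\eps^2$ term.

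The remainder $Q[\zeta{\bf W}_a]$ is nonnegative only because of stability, not as a sum of squares. Already in the planar model, $e_\eps=|\nabla_1u+i\nabla_2u|^2+(\eps F_{12}-\eps^{-1}h)^2$ up to a null Lagrangian. There the Hessian in the direction ${\bf W}_a$ contains the discrepancy times the second variation of $-\eps^{-1}h$, which is a multiple of $\eps^{-1}\int\zeta^2\xi_\eps|\nabla u|^2$. This term has no sign; it is negative exactly where $\xi_\eps<0$, the case of interest. Its only a priori bound is $C\eps^{-1}\|\eps^{-1}\xi_\eps\|_{L^2}$, which cannot be absorbed. It is precisely the term you were worried about.

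\textbf{The missing idea is duality.} Use stability only through the nonnegativity of $\Q$ and the upper bound \eqref{eq: test bound}, and never expand $\Q[\zeta{\bf W}_a]$. Pair $L[{\bf W}_a]$ with the auxiliary pairs $\b\Psi_a=(0,\psi X_a^\flat)$.
\begin{itemize}
\item The polarised \Cref{lem: tensor identity} and the scalar equation, through $2h-\eps^2\tr B=-2\eps\xi_\eps$, give $\sum_a\int L[{\bf W}_a]\cdot\b\Psi_a=-2\eps\int(\langle d\xi_\eps,d\psi\rangle+\eps^{-2}|u|^2\xi_\eps\psi)$, up to a lower-order geometric term.
\item Explicitly, $\sum_a\Q[\b\Psi_a]=\int(3\eps^2|d\psi|^2+3|u|^2\psi^2)+O(\eps^2\|\psi\|_{L^2}^2)$.
\item Take $\psi=-\eps^{-1}\eta^2\xi_\eps$. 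Cauchy--Schwarz for the nonnegative form $\Q$, applied with $\zeta{\bf W}_a$ where $\zeta=1$ near the support of $\eta$, gives $X^2\le CX+C$ for $X=\|d(\eta\xi_\eps)\|_{L^2}+\|\eps^{-1}|u|\eta\xi_\eps\|_{L^2}$.
\item \Cref{lem: coercivity} then removes the weight $|u|$.
\end{itemize}

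\textbf{Two smaller points.}
\begin{itemize}
\item In the positive part, one absorption with the cutoff term bounded via $\xi_\eps^+\le C$ only gives $\|\eps^{-1}\xi_\eps^+\|_{L^2}+\|d\xi_\eps^+\|_{L^2}\le C$. You need a second application on a nested cutoff, feeding in the first bound, to reach $C\eps$.
\item In the limit step, deriving equipartition from $\xi_\eps\to0$ is circular. What you actually need is $(\eps^{-2}h^2-\eps^2|F_\nabla|^2)\,d\vol_g\rightharpoonup0$. This follows without stability from the trace of \eqref{eq: stress convergence}, since $\tr T_\eps-e_\eps=2(\eps^{-2}h^2-\eps^2|F_\nabla|^2)$ in dimension three. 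The paper instead uses \eqref{eq: negative discrepancy} together with $e_\eps-\eps^{-2}h=-\Delta h+\eps^{-1}\xi_\eps(h+\eps^2|F_\nabla|)$.
\end{itemize}
The trace route is a valid alternative for \eqref{eq: scalar mass}. However, the vanishing of the weak limit of $\eps^{-1}\xi_\eps^-$ still requires the $L^2$ bound from the stability step.
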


\begin{proof}
    First, we show that every critical point satisfies \eqref{eq: positive discrepancy}, which is just a consequence of the discrepancy bound by \cite{Pigati-Stern2021}. Indeed, by \cite[Equation (3.7)]{Pigati-Stern2021}, we have 
    \begin{equation*}
(-\eps^2\Delta+|u|^2)\xi_\eps\leq C\eps^3|F_\nabla|. 
\end{equation*}
For $\eta\in C_c^\infty(\Omega)$, testing against $\eta^2\xi_\eps^+$ gives
\begin{equation*}
\begin{split}
\int_\Omega\big(\eps^2|d(\eta\xi_\eps^+)|^2+|u|^2\eta^2(\xi_\eps^+)^2\big)\,d\vol_g
\leq{}&C\eps^3\int_\Omega\eta^2|F_\nabla|\xi_\eps^+\,d\vol_g\\
&+\eps^2\int_\Omega|d\eta|^2(\xi_\eps^+)^2\,d\vol_g.
\end{split}
\end{equation*}
\Cref{lem: coercivity}, the energy bound and Young's inequality yield
\begin{equation*}
\|\eta\xi_\eps^+\|_{L^2}^2+\eps^2\|d(\eta\xi_\eps^+)\|_{L^2}^2\leq C\eps^4+C\eps^2\int_\Omega|d\eta|^2(\xi_\eps^+)^2\,d\vol_g.
\end{equation*}
Initially, the energy bound gives $\|\xi_\eps^+\|_{L^2(\Omega)}^2\leq C$. Apply the preceding inequality with two nested cutoffs, the inner one equal to one on $\Omega'$ and supported where the outer one is one. The first application gives an interior squared $L^2$ bound $C\eps^2$ for $\xi_\eps^+$, and the second gives $C\eps^4$, together with $\|d\xi_\eps^+\|_{L^2(\Omega')}^2\leq C\eps^2$. This proves \eqref{eq: positive discrepancy}.

\medskip

Next, we assume that the critical points $(u,\nabla)$ are stable. The proof of \eqref{eq: negative discrepancy} can be understood as a quantitative version of the estimates in \cite{Badran2026b}. 

For every real $\psi\in H^1(\Omega)$ with compact support, set $\b\Psi_a=(0,\psi X_a^\flat)$. Then, using the polarisation of \eqref{eq: tensor identity}, we find
\begin{equation*}
    \begin{split}
        \sum_a\int_\Omega L[{\bf W}_a]\cdot\b\Psi_a\,d\vol_g
        ={}&-\eps^2\int_\Omega\langle d(\tr B),d\psi\rangle\,d\vol_g\\
        &+\int_\Omega\psi(2|\nabla u|^2-|u|^2\tr B)\,d\vol_g\\
        &-\eps^2\int_\Omega\psi\langle H+\Ric,B\rangle\,d\vol_g.
    \end{split}
\end{equation*}
Testing the scalar equation in \eqref{eq: scalar curvature equations} against $2\psi$ and using that $2h-\eps^2\tr B=-2\eps \xi_\eps$, we get
\begin{equation}\label{eq: trace}
\begin{split}
\sum_a\int_\Omega L[{\bf W}_a]\cdot\b\Psi_a\,d\vol_g={}&-2\eps\int_\Omega\big(\langle d\xi_\eps,d\psi\rangle+\eps^{-2}|u|^2\xi_\eps\psi\big)\,d\vol_g\\
&-\eps^2\int_\Omega\psi\langle H+\Ric,B\rangle\,d\vol_g.
\end{split}
\end{equation}
Moreover, by applying \Cref{lem: tensor identity} to $T=\psi g$, we get
\begin{equation}\label{eq: scalar test form}
\sum_a\Q[\b\Psi_a]=\int_\Omega\big(3\eps^2|d\psi|^2+3|u|^2\psi^2+\eps^2(\operatorname{tr}H+\Scal)\psi^2\big)\,d\vol_g.
\end{equation}

\medskip

Choose $0\leq\eta,\zeta\leq1$ in $C_c^\infty(\Omega)$, with $\eta=1$ on $\Omega'$ and $\zeta=1$ on a neighbourhood of $\operatorname{supp}\eta$. Set $\psi=-\eps^{-1}\eta^2\xi_\eps$. The pairing of $\zeta{\bf W}_a$ with $\b\Psi_a$ equals that of ${\bf W}_a$ with $\b\Psi_a$, since $d\zeta=0$ near its support. By \eqref{eq: test bound}, Cauchy--Schwarz for $\Q$, \eqref{eq: scalar test form} and \Cref{lem: coercivity},
\begin{equation*}
\begin{split}
\left|\sum_a\int_\Omega L[{\bf W}_a]\cdot\b\Psi_a\,d\vol_g\right|
&\leq\left(\sum_a\Q[\zeta{\bf W}_a]\right)^{1/2}\left(\sum_a\Q[\b\Psi_a]\right)^{1/2}\\
&\leq C\big(\|d(\eta\xi_\eps)\|_{L^2}+\|\eps^{-1}|u|\eta\xi_\eps\|_{L^2}\big).
\end{split}
\end{equation*}
The geometric term in \eqref{eq: trace} is bounded by
\begin{equation*}
\begin{split}
C\eps\int_\Omega\eta^2|F_\nabla||\xi_\eps|\,d\vol_g
&\leq C\eps\|\eps^{-1}\eta\xi_\eps\|_{L^2}\\
&\leq C\eps\big(\|d(\eta\xi_\eps)\|_{L^2}+\|\eps^{-1}|u|\eta\xi_\eps\|_{L^2}\big).
\end{split}
\end{equation*}
The energy bound gives $\int_\Omega|d\eta|^2|\xi_\eps|^2\,d\vol_g\leq C$. Using
\begin{equation*}
\langle d\xi_\eps,d(\eta^2\xi_\eps)\rangle=|d(\eta\xi_\eps)|^2-|d\eta|^2(\xi_\eps)^2,
\end{equation*}
equation \eqref{eq: trace} therefore yields
\begin{equation*}
\begin{split}
\|d(\eta\xi_\eps)\|_{L^2}^2+\|\eps^{-1}|u|\eta\xi_\eps\|_{L^2}^2
\leq C\big(\|d(\eta\xi_\eps)\|_{L^2}+\|\eps^{-1}|u|\eta\xi_\eps\|_{L^2}\big)+C.
\end{split}
\end{equation*}
Both norms are bounded. \Cref{lem: coercivity} supplies the unweighted $L^2$ bound for $\eps^{-1}\eta\xi_\eps$, and $\eta=1$ on $\Omega'$ proves \eqref{eq: negative discrepancy}.

\medskip

Lastly, we prove \eqref{eq: strong scalar} and \eqref{eq: scalar mass}. 
Equation \eqref{eq: scalar curvature equations} gives, for every $0\leq\chi\in C_c^\infty(\Omega)$,
\begin{equation*}
\int_\Omega\chi\eps^{-2}h\,d\vol_g=\int_\Omega\chi\big(|\nabla u|^2+2\eps^{-2}h^2\big)\,d\vol_g+\int_\Omega h\Delta\chi\,d\vol_g\leq C_\chi.
\end{equation*}
Since $h\geq0$, the measures $\eps^{-2}h\,d\vol_g$ have locally bounded mass. In particular, $h\to0$ in $L^1_{\mathrm{loc}}$, so $\Delta h\to0$ distributionally. The same equation also gives
\begin{equation*}
e_\eps-\eps^{-2}h=-\Delta h+\eps^{-1}\xi_\eps(h+\eps^2|F_\nabla|).
\end{equation*}
By \eqref{eq: negative discrepancy}, $\eps^{-1}\xi_\eps$ is bounded in $L^2_{\mathrm{loc}}$, while $\|h+\eps^2|F_\nabla|\|_{L^2(\Omega)}\leq C\eps$ by the energy bound, hence the last term tends to zero in $L^1_{\mathrm{loc}}$. Hence $\eps^{-2}h\,d\vol_g\rightharpoonup\mu$ locally.

The negative part $\eps^{-1}\xi_\eps^-$ is locally bounded in $L^2$ and satisfies $0\leq\eps^{-1}\xi_\eps^-\leq\eps^{-2}h$. Every weak $L^2$ limit on an interior open set therefore defines an absolutely continuous nonnegative measure dominated by $\mu$. Since $\mu$ is singular with respect to volume, the $L^2$ limit is zero. Testing weak $L^2$ convergence against the indicator of $\Omega'$ gives $\|\eps^{-1}\xi_\eps^-\|_{L^1(\Omega')}\to0$. The rescaled positive part $\eps^{-1}\xi_\eps^+$ tends to zero in $L^2(\Omega')$ by \eqref{eq: positive discrepancy}, proving \eqref{eq: strong scalar}. Finally, $|F_\nabla|=\eps^{-2}h+\eps^{-1}\xi_\eps$ gives \eqref{eq: scalar mass}.
\end{proof}

\section{Geometric consequences}
Fix a sequence of smooth critical points with $\eps\to0$, $|u|\leq1$ and uniformly bounded energy, stable under compactly supported variations in $\Omega$. Pass to a subsequence defining $\mu=2\pi\|V\|$ as in \Cref{thm: PS21}. For the estimates below, we introduce a modified Jacobian, as in \cite[(III.23)]{Bethuel-Brezis-Orlandi2001} and \cite[Section 2]{Pigati-Stern2023}.

Fix $b_0\in(0,1)$ and a smooth nondecreasing function $t\colon[0,\infty)\to[0,1]$ which is zero near zero and equals one on $[b_0,\infty)$. Define $\chi(a)=t(a)/a$, extended smoothly by zero near $a=0$, and set
\begin{equation*}
J_\eps\coloneqq F_\nabla+d\big(\chi(|u|^2)j_\nabla u\big),\qquad
q_\eps\coloneqq(*J_\eps)^\sharp.
\end{equation*}
By \eqref{eq: current identities},
\begin{equation}\label{eq: core vorticity formula}
J_\eps=(1-t(|u|^2))F_\nabla+t'(|u|^2)K.
\end{equation}
Consequently,
\begin{equation*}
dJ_\eps=0,\qquad \div q_\eps=0,\qquad \spt q_\eps\subset\{|u|^2\leq b_0\}.
\end{equation*}
On this core, $h\geq c_0\coloneqq(1-b_0)/2>0$. We first state the integral curvature estimate for $q_\eps$.

\begin{proposition}[Integral curvature estimates]\label{prop: integral curvature estimates}
Fix $\Omega'\Subset\Omega$. For all sufficiently small $\eps$, $\nu$ is smooth near $\spt q_\eps\cap\overline{\Omega'}$. For every normal ball $B_{2r}(x)\subset\Omega'$ with $0<2r<r_0$,
\begin{equation}\label{eq: turning estimate}
\int_{B_r(x)}|\nabla_{q_\eps}\nu|\,d\vol_g\leq CE_\eps(u,\nabla;B_{2r}(x))^{1/2}(1+\eps/r)+C\eps E_\eps(u,\nabla;B_{2r}(x)).
\end{equation}
The integrand is extended by zero where $q_\eps=0$. In particular,
\begin{equation}\label{eq: turning limit}
\limsup_{\eps\to0}\int_{B_r(x)}|\nabla_{q_\eps}\nu|\,d\vol_g\leq C\sqrt r,
\end{equation}
with constant depending on $\Omega',\Lambda$.
\end{proposition}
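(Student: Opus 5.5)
Smoothness of $\nu$ near the core comes first. On $\spt q_\eps\cap\overline{\Omega'}$ we have $h\geq c_0$. The discrepancy upper bound \eqref{eq: PS discrepancy} alone does not bound $|F_\nabla|$ from below, so I would use the local curvature equation \eqref{eq: scalar curvature equations} on an $\eps$-ball together with the derivative bound \eqref{eq: curvature derivative}. Rescaling by $\eps$ and using the pointwise bounds \eqref{eq: pointwise}, the rescaled pair is $C^{1}$-close, along a subsequence, to an entire planar vortex configuration. There the vortex core satisfies $*F\neq0$, since the self-dual profile has $|F|=h$. If this compactness route becomes too delicate, I would instead define $\nu$ only on $\{q_\eps\neq0,\ F_\nabla\neq0\}$, which is all that \eqref{eq: turning estimate} needs: $\nu$ is smooth wherever $F_\nabla\neq0$, and the integrand is taken to vanish elsewhere.

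Next comes the bending bound. The stability comparison \eqref{eq: test bound}, applied with a cutoff $\zeta$ equal to $1$ on $B_{3r/2}(x)$ and supported in $B_{2r}(x)$, with $|d\zeta|\leq C/r$, gives
\[\textstyle\sum_a\Q[\zeta{\bf W}_a]\leq C(1+r^{-2})\,\eps^2\cdot\eps^{-2}E_\eps(B_{2r})\,.\]
This is not quite right dimensionally, and I would redo the scaling carefully. The goal is to extract from $\Q[\zeta{\bf W}_a]$ a nonnegative term of the form $\eps^2\int\zeta^2|\nabla B|^2$, and then isolate
\[\eps^2\int\zeta^2|F_\nabla|^2|\nabla_\nu\nu|^2 .\]
The decomposition $B=|F_\nabla|(g-\nu^\flat\otimes\nu^\flat)$ splits $\nabla B$ into a part carrying $d|F_\nabla|$ and a part carrying $|F_\nabla|\nabla\nu$. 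The component $(\nabla_\nu B)(\nu,\cdot)=-|F_\nabla|\nabla_\nu\nu$ is precisely the bending. The identity $\div b=0$ gives $\nu(|F|)=-|F|\div\nu$, and $d|F|$ is controlled through $d\xi_\eps$ by \eqref{eq: negative discrepancy} and through $dh$ by $|\nabla u|$. Together these should give
\[\eps^2\int_{B_{3r/2}}|F_\nabla|^2|\nabla_\nu\nu|^2\leq C\,\frac{E_\eps(B_{2r})}{r^2}\,(r+\eps)^2+C\eps E_\eps ,\]
or a similar bound; the exact shape matters only through the final Cauchy--Schwarz step.

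The turning estimate then follows from $|q_\eps|\leq C(|F_\nabla|+|\nabla u|^2)$, from \eqref{eq: core vorticity formula}, and from the pointwise comparison on the core, where $|\nabla u|^2\leq C\eps^{-2}h^2+C\eps^2\leq C|F_\nabla|+\dots$. For the latter I would use \eqref{eq: pointwise} together with $|F|=\eps^{-2}h+\eps^{-1}\xi$, absorbing $\xi^-$ via its $L^2$ bound. This gives
\[\int_{B_r}|\nabla_{q}\nu|\leq\int|q||\nabla_q\nu/|q||\lesssim\Big(\int_{\rm core}\eps^{-2}\Big)^{1/2}\Big(\eps^2\int|F|^2|\nabla_\nu\nu|^2\Big)^{1/2}.\]
The first factor is bounded by $C\,\vol(\text{core}\cap B_{2r})^{1/2}\eps^{-1}\leq CE_\eps^{1/2}$, because $h\geq c_0$ on the core. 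Here I also need $\nabla_q\nu$ to be parallel to $|q|\nabla_\nu\nu$. This holds on the part of the core where $J_\eps$ is proportional to $F_\nabla$; on the $t'(|u|^2)K$ part the direction of $K$ must be compared with $\nu$. That comparison is the main obstacle. I would control it by the estimate that $K-|u|^2F$ is small, namely $d(j_\nabla u)$, which in turn is small by the stress-tensor and discrepancy defect bounds. Any discrepancy in direction is charged to $|\nabla\nu|$ in all directions and bounded by the full $\eps^2\int|F|^2|\nabla\nu|^2$, which is available with the $(1+\eps/r)$ cutoff loss. Finally, \eqref{eq: turning limit} follows from \eqref{eq: local energy}: $E_\eps(B_{2r})\leq Cr$, and $\eps\to0$.
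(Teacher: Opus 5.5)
There is a genuine gap, and the decisive one is the bending bound. You cannot extract $\eps^2\int\zeta^2|\nabla B|^2$ from $\sum_a\Q[\zeta\mathbf{W}_a]\le C$. On the vortex core the main terms of $\Q[\zeta\mathbf{W}_a]$, including $\eps^2\int\zeta^2|\nabla B|^2$, are of order $\eps^{-2}$: integrands of order $\eps^{-4}$ on a set of volume of order $\eps^2$. The total is $O(1)$ only after cancellation against the indefinite cross term $4\langle i\Phi\cdot\nabla u,\phi\rangle$ and the negative term $-\eps^{-2}h|\phi|^2$. The dimensional mismatch you noticed is a symptom of this. Controlling $d|F_\nabla|$ ``through $dh$ by $|\nabla u|$'' is likewise too crude, since $\eps^{-2}|dh|$ is of order $\eps^{-3}$ there.

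The nonnegative quantity that stability actually controls is the gauge term. Since $Q\ge0$, \eqref{eq: gauge correction} and \eqref{eq: test bound} give $\eps^2\int\sum_aG[\zeta\mathbf{W}_a]^2\le C$, which \eqref{eq: projected fields} turns into $\int_{\Omega'}\eps^2|\div B-\eps^{-2}dh|^2\le C$. Because $|F_\nabla|\nu$ is divergence-free, $\div B-\eps^{-2}dh=-|F_\nabla|(\nabla_\nu\nu)^\flat+\eps^{-1}d\xi_\eps$. The cancellation $d|F_\nabla|-\eps^{-2}dh=\eps^{-1}d\xi_\eps$ and \eqref{eq: negative discrepancy} then give the global bound $\int_{\Omega'}\eps^2|F_\nabla|^2|\nabla_\nu\nu|^2\le C$. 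No cutoff at scale $r$ is needed: locality enters only through the weight $\eps^{-2}\alpha_\eps^2/|F_\nabla|^2\le Ce_\eps$. With this in hand, your Cauchy--Schwarz for the component of $q_\eps$ along $\nu$ is essentially the paper's.

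The transverse part has a second gap. The premise that $K-|u|^2F_\nabla=d(j_\nabla u)$ is small fails in $L^1$: at a zero of $u$, $|K|$ is of order $\eps^{-2}$ while $|u|^2F_\nabla=0$, so it tends to zero only distributionally. Also, nothing you have established bounds the full $\eps^2\int|F_\nabla|^2|\nabla\nu|^2$ uniformly; only the $\nabla_\nu\nu$ component carries that weight. What is small is the part of $*K$ orthogonal to $\nu$, namely $|R_\eps|\le C|\nabla u||\nabla_\nu u|$. The missing idea is the discrepancy equation \eqref{eq: defect equation} with its nonnegative defect $\mathcal S_\eps=|\nabla u|^2-(*K)(\nu)+\eps^2|F_\nabla||\nabla\nu|^2$. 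Testing it against $\eta h$, with the scalar equation and the discrepancy bounds, gives \eqref{eq: localized defect}. On the core, AM--GM gives $|\nabla_{R_\eps}\nu|\le C\eps^{-1}h\mathcal S_\eps$. This is where the factor $(1+\eps/r)$ and the term $C\eps E_\eps$ in \eqref{eq: turning estimate} come from.

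Finally, both the smoothness claim and the weights above require $\eps^2|F_\nabla|\ge c$ pointwise on the core. Your three routes do not supply this:
\begin{itemize}
\item ``Absorbing $\xi_\eps^-$ via its $L^2$ bound'' gives no pointwise statement.
\item The convergence to a planar vortex is asserted without justification.
\item Defining $\nu$ only where $F_\nabla\neq0$ does not prove the first claim.
\end{itemize}
The paper interpolates $\|\xi_\eps\|_{L^2(\Omega'')}\le C\eps$ against $\operatorname{Lip}(\xi_\eps)\le C\eps^{-2}$ to get $\eps\|\xi_\eps\|_{L^\infty(\Omega')}\le C\eps^{1/5}$, a step that uses dimension three. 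Your blow-up argument can be repaired with the same input, and no classification is needed: \eqref{eq: negative discrepancy} forces $|F|=h$ in the rescaled limit.
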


The preceding proposition controls the turning of $\nu=(*F_\nabla)^\sharp/|F_\nabla|$ along $q_\eps$. To relate this estimate to the limit varifold, we show that $\nu$ recovers its tangent directions and that $q_\eps$ carries its full unsigned mass. This follows from \Cref{thm: discrepancy} and the first-order identities used in \cite{Badran2026b}.

\begin{proposition}\label{prop: stress jacobian approximation}
For any measurable unit extension of $\nu$ to $\{F_\nabla=0\}$ and every $\Omega'\Subset\Omega$,
\begin{equation*}
\|T_\eps-e_\eps\nu^\flat\otimes\nu^\flat\|_{L^1(\Omega')}\longrightarrow0,
\end{equation*}
where $T_\eps$ is the stress-energy tensor. Decompose
\begin{equation}\label{eq: q decomposition}
q_\eps=\alpha_\eps\nu+R_\eps,\qquad R_\eps\perp\nu.
\end{equation}
Then $|q_\eps|\leq Ce_\eps$, and, locally as Radon measures in $\Omega$,
\begin{equation*}
(\alpha_\eps)_+\,d\vol_g\rightharpoonup\mu,\qquad |q_\eps|\,d\vol_g\rightharpoonup\mu.
\end{equation*}
Moreover, for every $\Omega'\Subset\Omega$,
\begin{equation*}
\int_{\Omega'}\big((\alpha_\eps)_-+|R_\eps|\big)\,d\vol_g\longrightarrow0.
\end{equation*}
\end{proposition}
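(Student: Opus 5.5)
The plan is to reduce everything to a single nonnegative pointwise \emph{defect}, and then show that this defect vanishes in $L^1_{\mathrm{loc}}$ by comparing weak limits of scalar densities that \Cref{thm: discrepancy} already identifies with $\mu$. The step I expect to be hardest is the limit identity (b) below.

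\textbf{Algebraic reduction.} Work on $\{|F_\nabla|>0\}$ with an orthonormal frame $(\nu,e_1,e_2)$ oriented so that $F_\nabla=|F_\nabla|\,e^1\wedge e^2$. By \eqref{eq: B identities}, $2\eps^2F^\top F=2\eps^2|F_\nabla|^2(g-\nu^\flat\otimes\nu^\flat)$. Writing $e_\eps=|\nabla u|^2+\eps^2|F_\nabla|^2+\eps^{-2}h^2$, this gives
\begin{equation*}
T_\eps-e_\eps\nu^\flat\otimes\nu^\flat=\big(|\nabla u|^2(g-\nu^\flat\otimes\nu^\flat)-2S\big)-\eps^{-1}\xi_\eps\big(\eps^2|F_\nabla|+h\big)(g-\nu^\flat\otimes\nu^\flat).
\end{equation*}
The last term tends to zero in $L^1(\Omega')$. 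Indeed, $\eps^2|F_\nabla|+h\le C$ by \eqref{eq: pointwise}, and $\eps^{-1}\xi_\eps\to0$ in $L^1(\Omega')$ by \eqref{eq: strong scalar}. On $\{F_\nabla=0\}$ we have $\xi_\eps\le0$ and $|T_\eps|\le Ce_\eps\le C(|\nabla u|^2+\eps^{-1}\xi_\eps^-\,\eps^{-1}h)$, so that set is handled the same way once the gradient part is controlled.

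For the first term, a direct check shows that the normal block vanishes exactly when $\nabla_{e_2}u=\pm i\nabla_{e_1}u$ and $\nabla_\nu u=0$. Introduce the defect
\begin{equation*}
D_\eps\coloneqq|\nabla u|^2-K(e_1,e_2)=|\nabla_\nu u|^2+\big|\nabla_{e_1}u+ i\nabla_{e_2}u\big|^2\ (\text{up to the sign convention})\ \geq0,
\end{equation*}
using $K(e_1,e_2)=-2\langle\nabla_{e_1}u,i\nabla_{e_2}u\rangle$. Cauchy--Schwarz then gives
\begin{equation*}
\big||\nabla u|^2(g-\nu^\flat\otimes\nu^\flat)-2S\big|\le C\,D_\eps^{1/2}|\nabla u|.
\end{equation*}
So it suffices to prove $\int_{\Omega'}D_\eps\to0$, because $|\nabla u|$ is bounded in $L^2$.

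\textbf{Vanishing of the defect.} Since $K(e_1,e_2)=\langle K,F_\nabla\rangle/|F_\nabla|$, the claim reduces to two weak limits for $0\le\zeta\in C^\infty_c$.

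(a) The scalar equation \eqref{eq: scalar curvature equations} gives $|\nabla u|^2=\eps^{-2}h-2\eps^{-2}h^2+\Delta h$. With \eqref{eq: scalar mass}, this identifies the weak limit of $|\nabla u|^2$ as $\lim(\eps^{-2}h-2\eps^{-2}h^2)$.

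(b) Using \eqref{eq: current identities}, write $\langle K,F_\nabla\rangle/|F_\nabla|=\langle d(j_\nabla u),*\nu\rangle+|u|^2|F_\nabla|$. Then $|u|^2|F_\nabla|=|F_\nabla|-2h|F_\nabla|$, where $|F_\nabla|\rightharpoonup\mu$ and $2h|F_\nabla|=2\eps^{-2}h^2+2\eps^{-1}h\,\xi_\eps$, the second piece being $o(1)$ in $L^1$. So (a) and (b) agree up to the term $\int\zeta\langle dj_\nabla u,*\nu\rangle$.

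This leftover term is the main obstacle. The plan is to split $j_\nabla u=\chi(|u|^2)j_\nabla u+(1-t(|u|^2))j_\nabla u$. The first piece is handled through $J_\eps$: $d(\chi j)=J_\eps-F_\nabla$. The second piece lives in the core, where $h\ge c_0$ and $\eps$-scale estimates apply. In each case I would integrate by parts, moving the derivative onto $\zeta\nu$. The term with $d\zeta$ is bounded by $\int|j|\lesssim\eps\int e_\eps^{1/2}\cdot\eps^{-1}h\cdot$ and becomes small. The term with $\nabla\nu$ would be controlled by the bending estimate $\int\eps^2|F_\nabla|^2|\nabla_\nu\nu|^2\le C$ together with $|j|\le|\nabla u|$ and $|u|\le C\eps\cdot$(core scale). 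If the bending only controls $\nabla_\nu\nu$ and not the transverse derivatives, the fallback is the nonnegativity $D_\eps\ge0$ plus a one-sided inequality: it suffices that $\limsup\int\zeta D_\eps\le0$. I would then try to obtain the needed sign from the comparison identity \eqref{eq: comparison}, whose positive term is $4\zeta^2(\nabla u)^\dagger(B+iF)(\nabla u)=4\zeta^2|F_\nabla|\,|\bar\partial u|^2$.

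\textbf{The Jacobian statements.} These follow from the defect bound. From \eqref{eq: core vorticity formula}, $|q_\eps|\le|F_\nabla|+C|\nabla u|^2\le Ce_\eps$. The decomposition \eqref{eq: q decomposition} gives
\begin{equation*}
\alpha_\eps=(1-t)|F_\nabla|+t'K(e_1,e_2),
\end{equation*}
while $R_\eps$ collects the components $K(\nu,e_i)$, which are bounded by $C|\nabla_\nu u||\nabla u|\le CD_\eps^{1/2}|\nabla u|$. Hence $\int|R_\eps|\to0$. Moreover $\alpha_\eps\ge-C t'D_\eps$, so $\int(\alpha_\eps)_-\to0$.

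For the mass, the plan is to show $\int\zeta\alpha_\eps\to\int\zeta\,d\mu$. Replacing $K(e_1,e_2)$ by $|\nabla u|^2$ up to $D_\eps$ reduces this to
\begin{equation*}
(1-t)|F_\nabla|+t'|\nabla u|^2\approx(1-t)\eps^{-2}h+t'\eps^2|\nabla|u|^2|^2/4\ldots
\end{equation*}
Rather than compute this directly, I would use the defect bound to write $\alpha_\eps=\langle J_\eps,F_\nabla\rangle/|F_\nabla|+o_{L^1}(1)$. Then
\begin{equation*}
\int\zeta\langle J_\eps-F_\nabla,F_\nabla\rangle/|F_\nabla|=\int\zeta\langle d(\chi j),*\nu\rangle,
\end{equation*}
which is exactly the integration by parts already handled above. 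Combining this with $|F_\nabla|\rightharpoonup\mu$ gives $(\alpha_\eps)_+\rightharpoonup\mu$. Finally, $|\alpha_\eps|\le|q_\eps|\le(\alpha_\eps)_++(\alpha_\eps)_-+|R_\eps|$ yields $|q_\eps|\,d\vol_g\rightharpoonup\mu$.
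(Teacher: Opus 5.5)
There is a genuine gap at the step you yourself flag as hardest, and everything else in your plan depends on it. The $R_\eps$, $(\alpha_\eps)_-$ and stress-tensor bounds all need $\int_{\Omega'}D_\eps\to0$, and your proposal does not establish it.

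Your reduction is sound. Steps (a) and (b) give the exact identity $D_\eps=-\Delta h-|u|^2\eps^{-1}\xi_\eps-\langle dj_\nabla u,*\nu^\flat\rangle$ on $\{F_\nabla\neq0\}$, so only the last term matters. The problem is how you propose to handle it.

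\textbf{Integrating by parts onto $\zeta\nu$ fails.} It produces $\langle j_\nabla u,\operatorname{curl}\nu\rangle$, whose $\nu$-component is the twist $\langle\nabla_{e_1}\nu,e_2\rangle-\langle\nabla_{e_2}\nu,e_1\rangle$. That is a transverse derivative, which \eqref{eq: auxiliary integral curvature} does not control. Even the $\nabla_\nu\nu$ component is not handled: the bending estimate is only a bound, not a smallness statement. Its weight $\eps^2|F_\nabla|^2$ also degenerates off the core, where $\nu$ is undefined on $\{F_\nabla=0\}$ and uncontrolled near it.

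\textbf{The fallback through \eqref{eq: comparison} does not close the gap either.} Its positive term $4\zeta^2|F_\nabla||\bar\partial u|^2$ involves only $\nabla_{e_1}u,\nabla_{e_2}u$. So it controls the second summand of \eqref{eq: planar defect} and never $|\nabla_\nu u|^2$.

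\textbf{The missing idea.} Use the curvature equation instead of moving derivatives onto $\nu$. Since $dF_\nabla=0$, you have $dj_\nabla u=\eps^2dd^*F_\nabla=-\eps^2\Delta F_\nabla$. The Weitzenb\"ock formula and $|\nabla b|^2=|d|b||^2+|b|^2|\nabla\nu|^2$ then give
\[
\langle dj_\nabla u,*\nu^\flat\rangle=-\eps^2\Delta|F_\nabla|+\eps^2|F_\nabla||\nabla\nu|^2+\eps^2|F_\nabla|\Ric(\nu,\nu).
\]
So the leftover term has exactly the favourable sign you were looking for. Substituted into your identity, this is the discrepancy equation \eqref{eq: defect equation}, with $\mathcal S_\eps=D_\eps+\eps^2|F_\nabla||\nabla\nu|^2\geq0$. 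Testing it against a cutoff and using \eqref{eq: strong scalar} and \eqref{eq: scalar mass} gives $\int_{\Omega'}\mathcal S_\eps\to0$. This is the paper's route; it regularises $|b|$ by $\sqrt{|b|^2+\delta^2}$ to cross the zero set.

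\textbf{The mass identification has the same gap.} The form $d(\chi(|u|^2)j_\nabla u)=t'(|u|^2)K-t(|u|^2)F_\nabla$ lives where $\nu$ is uncontrolled and is not of the form $-\eps^2\Delta F_\nabla$. So it is not ``the integration by parts already handled''. The direct computation you set aside is what works. Since $t(a)=a\chi(a)$, the scalar equation yields \eqref{eq: unsigned identity}. It writes $\alpha_\eps$ as $\eps^{-2}h$, plus a Laplacian, plus $(1-t)\eps^{-1}\xi_\eps$, plus terms bounded by $t'D_\eps$ and $|\chi'(|u|^2)||u|^2|\nabla u|D_\eps^{1/2}$. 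All of these vanish locally once $D_\eps\to0$.

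\textbf{A smaller point.} The bound $|q_\eps|\le Ce_\eps$ needs the factor $1-t(|u|^2)$ from \eqref{eq: core vorticity formula}, together with $h\ge c_0$ on its support. You drop that factor, and the bound $|F_\nabla|\le Ce_\eps$ you then use is not available where $h$ is small.
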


The preceding two propositions imply the main result.

\begin{theorem}\label{thm: main}
    For the sequence and hypotheses fixed at the start of this section, the limit varifold $V$ is locally a finite union of smooth geodesic arcs without interior endpoints, counted with positive integer multiplicities and possibly intersecting. If $\Omega=M$ is closed, $V$ is a finite union of closed immersed geodesics, counted with positive integer multiplicities.
\end{theorem}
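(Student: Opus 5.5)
The plan is to first use the classical structure theory for stationary integral one-varifolds, and then use \Cref{prop: integral curvature estimates,prop: stress jacobian approximation} to force antipodal pairing at every vertex. By Allard--Almgren, a stationary integral one-varifold $V$ in $\Omega$ is locally a finite union of geodesic segments meeting at isolated vertices. Around each point $x_0\in\spt\|V\|$ there is a normal ball $B_{\rho}(x_0)$ in which $V$ is a finite union of geodesic arcs from $x_0$ to $\partial B_\rho$. The arcs have distinct unit initial directions $v_1,\dots,v_N\in T_{x_0}M$ and integer multiplicities $m_i\ge1$, and stationarity gives $\sum_i m_iv_i=0$. It then suffices to show, at every such $x_0$, that
\[
\sum_i m_i\delta_{v_i}=\sum_i m_i\delta_{-v_i}.
\]
Granting this, each arm with direction $v$ has an arm with direction $-v$ of equal multiplicity. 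Splitting multiplicities, we pair arms into arcs that are $C^1$ through $x_0$. By uniqueness for the geodesic equation, each concatenation is a single smooth geodesic. Hence $V$ is locally a finite union of smooth geodesic arcs without interior endpoints. If $\Omega=M$ is closed, compactness and finite mass let us continue each geodesic through all vertices. The continuation passes through only finitely many vertex/direction states, so it must close up, and $V$ is a finite sum of closed immersed geodesics with integer multiplicities.

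To prove the pairing, fix $x_0$ and work in normal coordinates on $B_{2r}(x_0)$, with $r<\rho/2$ small. Take a test function $\Phi(x,v)=\varphi(|x|/r)\,a(\tilde v)$. Here $\varphi$ is a radial cutoff equal to $1$ on $[0,1/2]$ and to $0$ near $1$. The function $a$ is smooth and odd on the unit sphere, and $\tilde v$ denotes the coordinate parallel transport of $v$ to $T_{x_0}M$ (or simply its Euclidean coordinate normalization). Since $\div q_\eps=0$,
\[
0=\int q_\eps\cdot\nabla_x\big[\Phi(x,\nu)\big]=\int \big(\partial_x\Phi\big)(q_\eps)+\int D_v\Phi\big(\nabla_{q_\eps}\nu\big).
\]
By \eqref{eq: turning limit}, the second term is bounded by $C_a\sup|\varphi|\,C\sqrt r$ in the limsup, with $C_a=\|Da\|_\infty$. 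For the first term, write $q_\eps=\alpha_\eps\nu+R_\eps$ using \Cref{prop: stress jacobian approximation}. The contributions of $(\alpha_\eps)_-$ and $R_\eps$ vanish in the limit, leaving $\int (\alpha_\eps)_+\,\varphi'(|x|/r)r^{-1}\,\partial_\nu|x|\,a(\tilde\nu)$ plus a term of size $O(r)\cdot\|q_\eps\|_{L^1}$ coming from the $x$-dependence of $\tilde\cdot$.

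The main obstacle is passing this integrand to the limit. The integrand depends on $\nu$ nonlinearly, and $\nu$ is only known to align with the tangent $\tau$ of $V$ in an $L^1$, energy-weighted sense. I would handle it by noting that $\nu\mapsto(\partial_\nu|x|)\,a(\tilde\nu)$ is an even function of $\nu$, since it is a product of two odd factors. An even function of $\nu$ is a smooth function of $\nu\otimes\nu$. The convergence $\|T_\eps-e_\eps\nu^\flat\otimes\nu^\flat\|_{L^1}\to0$ together with $T_\eps\rightharpoonup\tau\otimes\tau\,\mu$ then gives a Young-measure statement: the joint measures $(\alpha_\eps)_+\delta_{\nu\otimes\nu}$ converge to $\mu\,\delta_{\tau\otimes\tau}$. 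This uses $|q_\eps|\le Ce_\eps$ and the fact that $(\alpha_\eps)_+$ and $e_\eps$ have the same limit $\mu$; to get there, I would first show $\int e_\eps|\nu\otimes\nu-P|\to0$ for a continuous approximant $P$ of $\tau\otimes\tau$. The limit of the first term is then
\[
2\pi\sum_i m_i\int_{\text{arm } i}\varphi'(s/r)r^{-1}\,a(\tau_i(s))\,ds,
\]
with $\tau_i$ pointing outward. Since the arms are geodesics with $\tau_i(s)\to v_i$, this equals $-2\pi\sum_i m_ia(v_i)+O(r)$ by the fundamental theorem of calculus. We therefore obtain $|\sum_i m_ia(v_i)|\le C_a\sqrt r+C r$, and letting $r\to0$ gives $\sum_i m_ia(v_i)=0$ for every odd $a$.

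Finally, apply this identity to $a(v)=f(v)-f(-v)$ with $f$ arbitrary and smooth. This gives $\sum m_if(v_i)=\sum m_if(-v_i)$, which is exactly the claimed equality of atomic measures on the sphere.
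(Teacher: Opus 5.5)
Your proposal is correct and follows essentially the same route as the paper: the Allard--Almgren network structure, the identity $\div q_\eps=0$ tested against a radial cutoff times an odd function of the transported direction, the turning estimate \eqref{eq: turning limit} for the vertical term, a Young-measure identification of the resulting even integrand via \Cref{prop: stress jacobian approximation} and \eqref{eq: stress convergence}, and finally antipodal pairing plus uniqueness of geodesics. The differences are cosmetic. You discard the $O(r)$ contribution of the $x$-dependence of the transport and Christoffel terms before letting $\eps\to0$, and you identify the Young measure through a continuous approximant of $\tau\otimes\tau$; there your ``$\to0$'' should read ``$\limsup$ small''. The paper instead passes the full horizontal derivative to the limit by disintegration and evaluates it exactly on the radial arms. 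To justify your integration by parts, you should also invoke the smoothness of $\nu$ near $\spt q_\eps$ from \Cref{prop: integral curvature estimates}.
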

\begin{proof}
Pigati--Stern compactness and \cite[Section 3]{Allard-Almgren1976} give a locally finite geodesic network in $\Omega$. We show that its outgoing directions pair antipodally at every interior vertex. Consider $\tau$ from \eqref{eq: stress convergence}. Now let $S\Omega$ be the unit tangent bundle over $\Omega$, and let $\Xi(x,v)\in T_x^*M$ be continuous on $S\Omega$, compactly supported in the base variable, and odd in $v$. Then we claim:
\begin{equation*}
\lim_{\eps\to0}\int_\Omega\Xi(x,\nu)(q_\eps)\,d\vol_g=\int_\Omega\Xi(x,\tau)(\tau)\,d\mu.
\end{equation*}
On the bundle $\mathbb P(T\Omega)$ of unoriented unit directions, consider
\begin{equation*}
e_\eps(x)\,d\vol_g(x)\,\delta_{[\nu(x)]}.
\end{equation*}
These measures have uniformly bounded mass over compact subsets of the base. A subsequential local weak limit has base measure $\mu$. Disintegrate it as $\mu(dx)\sigma_x(d[v])$, where $\sigma_x$ is a probability measure for $\mu$-almost every $x$. By \eqref{eq: stress convergence} and \Cref{prop: stress jacobian approximation}, for $\mu$-almost every $x$ we have
\begin{equation*}
\int v^\flat\otimes v^\flat\,d\sigma_x([v])=\tau^\flat\otimes\tau^\flat,
\qquad
\int\big(1-\langle v,\tau\rangle^2\big)\,d\sigma_x([v])=0.
\end{equation*}
Thus $\sigma_x=\delta_{[\tau(x)]}$, identifying every subsequential limit.

Since $0\leq(\alpha_\eps)_+\leq Ce_\eps$, any local weak limit of $(\alpha_\eps(x))_+\,d\vol_g(x)\,\delta_{[\nu(x)]}$ is dominated by $C$ times the preceding limit. Its base measure is $\mu$ by \Cref{prop: stress jacobian approximation}, so it equals $\mu(dx)\delta_{[\tau(x)]}$. Finally, oddness makes $\Xi(x,v)(v)$ a continuous function on $\mathbb P(T\Omega)$, and
\begin{equation*}
\int_\Omega\Xi(x,\nu)(q_\eps)\,d\vol_g-\int_\Omega(\alpha_\eps)_+\Xi(x,\nu)(\nu)\,d\vol_g\longrightarrow0
\end{equation*}
by the vanishing negative and transverse parts on the compact base support of $\Xi$. Apply the convergence just proved to the second integral. 

For $\varphi\in C_c^\infty(S\Omega)$, let $d_x^H\varphi$ and $d_v\varphi$ denote its horizontal and vertical derivatives. Where $\nu$ is smooth, the chain rule gives
\begin{equation*}
d(\varphi(x,\nu(x)))(q_\eps)=d_x^H\varphi(x,\nu)(q_\eps)+d_v\varphi(x,\nu)(\nabla_{q_\eps}\nu).
\end{equation*}
For sufficiently small $\eps$, $\nu$ is smooth near the intersection of $\spt q_\eps$ with the compact base support of $\varphi$. Multiply $\varphi(x,\nu(x))$ by a smooth cutoff supported where $\nu$ is smooth and equal to one near this intersection, then extend by zero. The derivative of the cutoff has zero pairing with $q_\eps$ wherever $\varphi$ can be nonzero. Since $\div q_\eps=0$,
\begin{equation}\label{eq: directional conservation}
\int_\Omega d_x^H\varphi(x,\nu)(q_\eps)\,d\vol_g=-\int_\Omega d_v\varphi(x,\nu)(\nabla_{q_\eps}\nu)\,d\vol_g.
\end{equation}

We claim that, at every vertex $x_0\in\Omega$ of the limiting network, the outgoing directions $v_i$ and positive integer multiplicities $m_i$ satisfy
\begin{equation}\label{eq: antipodal}
\sum_i m_i\delta_{v_i}=\sum_i m_i\delta_{-v_i}.
\end{equation}
Choose $r>0$ small enough that $\overline{B_r(x_0)}\subset\Omega$ and the network in this normal ball consists exactly of the finitely many incident geodesic arms. Given a smooth odd function $a\colon S_{x_0}M\to\R$, choose a smooth cutoff $\zeta$ equal to one near zero and zero near $[1,\infty)$, and set
\begin{equation*}
\varphi_r(x,v)\coloneqq\zeta(\dist(x,x_0)/r)a(P_{x\to x_0}v),
\end{equation*}
where $P_{x\to x_0}$ is radial parallel transport. Extend by zero outside the ball. Then $\varphi_r\in C_c^\infty(S\Omega)$ is odd in $v$, and $\|d_v\varphi_r\|_\infty\leq C_a$ independently of $r$.

For fixed $r$, apply the preceding calculations to $\Xi=d_x^H\varphi_r$. By \eqref{eq: directional conservation} and \eqref{eq: turning limit},
\begin{equation}\label{eq: odd test bound}
\left|\int_\Omega d_x^H\varphi_r(x,\tau)(\tau)\,d\mu\right|\leq C_a\sqrt r.
\end{equation}
The integrand is independent of the sign of $\tau$. On the $i$th outward arm $\gamma_i(t)=\exp_{x_0}(tv_i)$, radial parallel transport makes $\dot\gamma_i$ constant, so
\begin{equation*}
d_x^H\varphi_r(\gamma_i(t),\dot\gamma_i(t))(\dot\gamma_i(t))=r^{-1}\zeta'(t/r)a(v_i).
\end{equation*}
Its integral over the arm is $-a(v_i)$. Since $\mu=2\pi\|V\|$, \eqref{eq: odd test bound} gives
\begin{equation*}
2\pi\left|\sum_i m_i a(v_i)\right|\leq C_a\sqrt r.
\end{equation*}
Let $r\downarrow0$. For any smooth $b$, choose $a(v)=b(v)-b(-v)$ to obtain \eqref{eq: antipodal}. Pairing opposite arms with matching multiplicities and uniqueness for the geodesic equation give smooth continuation through every vertex, with no interior endpoints. When $\Omega=M$ is closed, local finiteness and compactness give a finite network. Resolving its vertices by these pairings decomposes the network, with multiplicity, into finitely many closed immersed geodesics. This proves the theorem.
\end{proof}

\subsection{The integral curvature estimates}

We now prove \Cref{prop: integral curvature estimates,prop: stress jacobian approximation} for the sequence fixed above.

Recall that $\nu=(*F_\nabla)^\sharp/|F_\nabla|$ on $\{|F_\nabla|>0\}$. One of the main tools in the analysis is the following defect:
\begin{equation*}
\mathcal S_\eps\coloneqq|\nabla u|^2-(*K)(\nu)+\eps^2|F_\nabla||\nabla\nu|^2
\end{equation*}
on this set, and extend it by $|\nabla u|^2$ where $F_\nabla=0$. In an oriented orthonormal frame with $e_3=\nu$, completing the square gives
\begin{equation}\label{eq: planar defect}
|\nabla u|^2-(*K)(\nu)=|\nabla_{e_3}u|^2+|\nabla_{e_1}u+i\nabla_{e_2}u|^2\geq0.
\end{equation}
In particular, $\mathcal S_\eps\geq0$. It vanishes in a neighbourhood if the Bogomolny equations hold in the curvature direction $\nu$ and $\nu$ is parallel. For critical points, the defect appears in the equation for the discrepancy: on $\{|F_\nabla|>0\}$,

\begin{equation}\label{eq: defect equation}
(\Delta-\eps^{-2}|u|^2)\xi_\eps=\eps^{-1}\mathcal S_\eps+\eps|F_\nabla|\Ric(\nu,\nu).
\end{equation}
Using the bounds already obtained for the discrepancy for stable critical pairs and \eqref{eq: defect equation}, we prove the following proposition which will be the key estimate in proving the localised integral curvature estimates of \Cref{prop: integral curvature estimates}.

\begin{proposition}[Defect estimates]\label{prop: curvature}

Let $(u,\nabla)$ be a smooth critical point in $\Omega$, stable under compactly supported variations, with $|u|\leq1$ and $E_\eps(u,\nabla;\Omega)\leq\Lambda$. For every $\Omega'\Subset\Omega$ and all sufficiently small $\eps$,
\begin{equation}\label{eq: localized defect}
\eps^{-1}\int_\Omega\eta h\mathcal S_\eps\,d\vol_g\leq CE_\eps(u,\nabla;\Omega_0)^{1/2}(1+\eps\|d\eta\|_{L^\infty})+C\eps E_\eps(u,\nabla;\Omega_0),
\end{equation}
for $0\leq\eta\leq1$ smooth with compact support in an open set $\Omega_0\subset\Omega'$. The constant depends on the fixed interior domains, their geometry and $\Lambda$, but not on $\Omega_0$, $\eta$ or $\eps$. Since $h\geq c_0$ on $\{|u|^2\leq b_0\}$, the same right-hand side, with a different constant, bounds
\begin{equation*}
\eps^{-1}\int_{\Omega\cap\{|u|^2\leq b_0\}}\eta\mathcal S_\eps\,d\vol_g.
\end{equation*}
Moreover,

\begin{align}\label{eq: auxiliary integral curvature}
    \int_{\Omega'\cap\{|F_\nabla|>0\}}\eps^2|F_\nabla|^2|\nabla_\nu\nu|^2\,d\vol_g \leq C.
\end{align}
Along the sequence defining $\mu$, we also have
\begin{equation}\label{eq: full defect convergence}
\int_{\Omega'}\mathcal S_\eps\,d\vol_g\longrightarrow0,
\end{equation}
and, for any measurable unit extension of $\nu$ to $\{F_\nabla=0\}$,
\begin{equation}\label{eq: first order convergence}
\int_{\Omega'}\big(|\nabla u|^2-(*K)(\nu)\big)\,d\vol_g\longrightarrow0.
\end{equation}
\end{proposition}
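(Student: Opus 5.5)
We prove \eqref{eq: localized defect} by testing the defect equation \eqref{eq: defect equation} against $\eta h$. Multiplying by $\eps\eta h$ and integrating, after noting that the set $\{F_\nabla=0\}$ is harmless (there $\mathcal S_\eps=|\nabla u|^2$ and the equation degenerates consistently, or we approximate by working on $\{|F_\nabla|>\delta\}$ and letting $\delta\to0$ using the Lipschitz regularity of $\xi_\eps$), we obtain
\begin{equation*}
\eps^{-1}\int_\Omega\eta h\,\mathcal S_\eps\,d\vol_g=\int_\Omega\eta h(\Delta-\eps^{-2}|u|^2)\xi_\eps\,d\vol_g-\eps\int_\Omega\eta h|F_\nabla|\Ric(\nu,\nu)\,d\vol_g.
\end{equation*}
The last term is bounded by $C\eps\int_{\Omega_0}h|F_\nabla|\leq C\eps E_\eps(u,\nabla;\Omega_0)$, since $h|F_\nabla|\leq\eps^{-1}h^2+\eps|F_\nabla|^2\cdot\eps\cdot\eps^{-1}$, which is controlled by $e_\eps$. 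In the first term we integrate by parts once:
\begin{equation*}
-\int\langle d(\eta h),d\xi_\eps\rangle-\int\eta\eps^{-2}h|u|^2\xi_\eps.
\end{equation*}
Since $|dh|\leq|\nabla u|$, Cauchy--Schwarz gives $\int\eta|dh||d\xi_\eps|\leq E_\eps(\Omega_0)^{1/2}\|d\xi_\eps\|_{L^2(\Omega')}$. The $d\eta$ term is bounded by $\|d\eta\|_\infty\|h\|_{L^2(\Omega_0)}\|d\xi_\eps\|_{L^2}\leq C\eps\|d\eta\|_\infty E_\eps(\Omega_0)^{1/2}$. The potential term is bounded by $\|\eps^{-1}h\|_{L^2(\Omega_0)}\|\eps^{-1}\xi_\eps\|_{L^2(\Omega')}\leq CE_\eps(\Omega_0)^{1/2}$. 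Each of these uses the stable bound \eqref{eq: negative discrepancy} of \Cref{thm: discrepancy}, and together they give \eqref{eq: localized defect}. The core statement then follows from $h\geq c_0$ and $\mathcal S_\eps\geq0$, the latter from \eqref{eq: planar defect}.

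For \eqref{eq: auxiliary integral curvature}, the plan is to return to the comparison \eqref{eq: comparison}, whose left side contains $\sum_a\Q[\zeta{\bf W}_a]$. The term $\eps^2|d\Phi|^2+\eps^2|d^*\Phi|^2$ with $\Phi=-\zeta B(X_a,\cdot)$ is bounded, via \Cref{lem: tensor identity}, below by $\eps^2\int\zeta^2|\nabla B|^2$ minus lower-order terms. Using $B=|F_\nabla|(g-\nu^\flat\otimes\nu^\flat)$, we write $|\nabla B|^2\geq 2|F_\nabla|^2|\nabla\nu|^2+\ldots$. Alternatively, and more robustly, we could use the divergence of $B$. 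Since $\div b=0$, we have $d^*(B)$ paired with $\nu$ expressing $|F_\nabla|\nabla_\nu\nu$ plus $d|F_\nabla|$ components. Then $\eps|F_\nabla|\nabla_\nu\nu$ is expressed via $\eps\,d^*B$ and $\eps\,d|F_\nabla|=d\xi_\eps+\eps^{-1}dh$. The quantity $\eps^2\|d^*(\zeta B(X_a,\cdot))\|^2$ is bounded by \eqref{eq: test bound}, $d\xi_\eps$ is bounded by \eqref{eq: negative discrepancy}, and the $dh$ contribution is bounded by the energy after projection orthogonal to $\nu$. The main obstacle is this step: we must check which component of $d^*B$ isolates $|F_\nabla|\nabla_\nu\nu$. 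Since $b=|F_\nabla|\nu$ with $\div b=0$, we get $d^*B(\cdot)=-d|F_\nabla|+\langle d|F_\nabla|,\nu\rangle\nu^\flat+|F_\nabla|(\nabla_\nu\nu)^\flat$ up to sign. The $\perp\nu$ part is therefore $|F_\nabla|\nabla_\nu\nu-(d|F_\nabla|)^\perp$, and we conclude after bounding $\eps(d|F_\nabla|)^\perp$ in $L^2$ by $\|d\xi_\eps\|$ plus $\eps^{-1}|\nabla u|$, which lies in $L^2$ by the energy bound.

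For \eqref{eq: full defect convergence}, we split $\Omega'$ into $\{h\geq\delta\}$ and $\{h<\delta\}$. On the first set, \eqref{eq: localized defect} with a fixed $\eta$ gives $\int\mathcal S_\eps\leq C\delta^{-1}\eps$, which tends to $0$. On the second set, the estimate \eqref{eq: pointwise} gives $|\nabla u|^2\leq C\eps^{-2}\delta h+C\eps^2$, so $\int_{\{h<\delta\}}|\nabla u|^2\leq C\delta$ by \eqref{eq: scalar mass}. Similarly, $\eps^2|F_\nabla||\nabla\nu|^2$ has to be controlled there. For this we use $\eps^2|F_\nabla||\nabla\nu|^2\leq\eps^2|\nabla F_\nabla|^2/|F_\nabla|$, together with the Kato-type identity $\|\nabla B\|^2=\|\nabla F\|^2$ and the bound $|F_\nabla|\lesssim\eps^{-2}h+\eps^{-1}|\xi_\eps|$. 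This gives smallness as $\delta\to0$ in the limit, and is a secondary technical point. Finally, \eqref{eq: first order convergence} follows from \eqref{eq: full defect convergence} since $0\leq|\nabla u|^2-(*K)(\nu)\leq\mathcal S_\eps$ on $\{F_\nabla\neq0\}$, while on $\{F_\nabla=0\}$ we have $|\nabla u|^2-(*K)(\nu)\leq2|\nabla u|^2=2\mathcal S_\eps$ by $|K|\leq|\nabla u|^2$.
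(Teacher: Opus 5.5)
Your proof of \eqref{eq: localized defect} works, and it is slightly more direct than the paper's. You bound the potential term by $\|\eps^{-1}h\|_{L^2(\Omega_0)}\|\eps^{-1}\xi_\eps\|_{L^2(\Omega')}$ instead of substituting the scalar equation. Across $\{F_\nabla=0\}$ you can only claim an inequality, not an equality. Truncating to $\{|F_\nabla|>\delta\}$ leaves boundary terms whose sign must be checked; this is why the paper regularises with $f_\delta$ and uses Fatou. Your proof of \eqref{eq: first order convergence} is the paper's. The other two steps, however, have genuine gaps.

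\textbf{The bound \eqref{eq: auxiliary integral curvature}.} You try to control $\eps(\div B)^\perp$ and $\eps(d|F_\nabla|)^\perp$ separately in $L^2$, but neither is uniformly bounded. Near a straight vortex line, $\nabla_\nu\nu=0$ and $\div B=d|F_\nabla|$ is transverse to $\nu$, with $\eps^2\int|d|F_\nabla||^2\sim\eps^{-2}$ per unit length. Equivalently, $\|\eps^{-1}dh\|_{L^2}\sim\eps^{-1}$. So your claim that $\eps^{-1}|\nabla u|$ lies in $L^2$ by the energy bound is false; the energy only gives $\|\nabla u\|_{L^2}\le C$. Projecting orthogonally to $\nu$ does not help, since $dh$ is essentially transverse. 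Nor does \eqref{eq: test bound} bound $\eps^2\|d^*(\zeta B(X_a,\cdot))\|_{L^2}^2$. The form $\Q$ contains the indefinite terms $4\langle i\Phi\cdot\nabla u,\phi\rangle$ and $\eps^{-2}(|u|^2-h)|\phi|^2$, which are of order $\eps^{-2}$ and negative in the core.

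The missing idea is to use stability a second time, through the gauge-correction identity \eqref{eq: gauge correction}: $\eps^2\int G[\zeta{\bf W}_a]^2=\Q[\zeta{\bf W}_a]-Q[\zeta{\bf W}_a]\le\Q[\zeta{\bf W}_a]$. Since $\langle iu,i\nabla_{X_a}u\rangle=-dh(X_a)$, one has $G[{\bf W}_a]=(\div B-\eps^{-2}dh)(X_a)+\sum_iB(e_i,\nabla_{e_i}X_a)$. After summing in $a$, the cross term vanishes by \eqref{eq: projected fields}, giving $\int_{\Omega'}\eps^2|\div B-\eps^{-2}dh|^2\le C$. Only this combination is controlled. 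Since $\div B-\eps^{-2}dh=-|F_\nabla|(\nabla_\nu\nu)^\flat+\eps^{-1}d\xi_\eps$, the two large terms cancel up to $\eps^{-1}d\xi_\eps$, which \eqref{eq: negative discrepancy} handles.

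\textbf{The convergence \eqref{eq: full defect convergence}.} On $\{h<\delta\}$ you have no bound for $\eps^2|F_\nabla||\nabla\nu|^2$. Bounding it by $\eps^2|\nabla F_\nabla|^2/|F_\nabla|$ would need a lower bound on $|F_\nabla|$. But $|F_\nabla|\lesssim\eps^{-2}h+\eps^{-1}|\xi_\eps|$ is an upper bound, and nothing controls $\int\eps^2|\nabla F_\nabla|^2/|F_\nabla|$ uniformly near the zero set of $F_\nabla$ or in the tails. This is the entire bending part of $\mathcal S_\eps$, not a secondary point.

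The ingredient you never use is \eqref{eq: strong scalar}. Test \eqref{eq: defect inequality} with a fixed $\phi\ge0$ equal to one near $\overline{\Omega'}$, with no $h$-weight, and integrate by parts:
\[
\int_{\Omega'}\mathcal S_\eps\,d\vol_g\le\eps\int_\Omega\xi_\eps\Delta\phi\,d\vol_g-\eps^{-1}\int_\Omega|u|^2\xi_\eps\phi\,d\vol_g+C\eps^2\int_\Omega\phi|F_\nabla|\,d\vol_g.
\]
The first and last terms vanish by \eqref{eq: negative discrepancy} and \eqref{eq: scalar mass}. The middle term is only bounded by \eqref{eq: negative discrepancy}. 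It tends to zero precisely because $\eps^{-1}\xi_\eps\to0$ in $L^1_{\mathrm{loc}}$, which rests on the singularity of $\mu$. With this, no splitting into $\{h\ge\delta\}$ and $\{h<\delta\}$ is needed.
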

Before proving this proposition, we show how its estimates imply \Cref{prop: stress jacobian approximation,prop: integral curvature estimates}. First we identify the limiting mass of the modified Jacobian and its associated vector field $q_\eps$, and then the limit of the stress-energy tensor.
\begin{proof}[Proof of \Cref{prop: stress jacobian approximation}]
First note that:
\begin{equation}\label{eq: q coefficients}
\begin{split}
\alpha_\eps&=(1-t(|u|^2))|F_\nabla|+t'(|u|^2)(*K)(\nu),\\
R_\eps&=t'(|u|^2)\big((*K)^\sharp-(*K)(\nu)\nu\big).
\end{split}
\end{equation}
Where the coefficients in \eqref{eq: core vorticity formula} are nonzero, $h\geq c_0$ and $2h|F_\nabla|\leq e_\eps$. Together with $|K|\leq|\nabla u|^2$, this proves that $|q_\eps|\leq C e_\eps$. Since $1-t\geq0$ and $t'\geq0$,
\begin{equation*}
(\alpha_\eps)_-\leq C\big(|\nabla u|^2-(*K)(\nu)\big)\leq C\mathcal S_\eps.
\end{equation*}
The integral of the right-hand side over every interior domain tends to zero by \eqref{eq: full defect convergence}. In an oriented orthonormal frame with $e_3=\nu$, the components of $(*K)^\sharp$ perpendicular to $\nu$ contain $\nabla_{e_3}u$. Hence
\begin{equation}\label{eq: transverse bound}
|R_\eps|\leq C|\nabla u||\nabla_\nu u|\leq C|\nabla u|\big(|\nabla u|^2-(*K)(\nu)\big)^{1/2}.
\end{equation}
Integrating on $\Omega'$ and applying \eqref{eq: first order convergence}, the local energy bound and Cauchy--Schwarz, we obtain
\begin{equation*}
\int_{\Omega'}\big((\alpha_\eps)_-+|R_\eps|\big)\,d\vol_g\longrightarrow0.
\end{equation*}

Since $t(a)=a\chi(a)$, the scalar equation and the chain rule give
\begin{equation*}
\begin{split}
\frac12\Delta\left(\int_1^{|u|^2}\chi(a)\,da\right)={}&t'(|u|^2)|\nabla u|^2-\eps^{-2}t(|u|^2)h\\
&+\frac12\chi'(|u|^2)\big(|d|u|^2|^2-2|u|^2|\nabla u|^2\big).
\end{split}
\end{equation*}
Substitution in \eqref{eq: q coefficients} gives
\begin{equation}\label{eq: unsigned identity}
\begin{split}
\alpha_\eps={}&\eps^{-2}h+\frac12\Delta\left(\int_1^{|u|^2}\chi(a)\,da\right)+(1-t(|u|^2))\eps^{-1}\xi_\eps\\
&-t'(|u|^2)\big(|\nabla u|^2-(*K)(\nu)\big)\\
&-\frac12\chi'(|u|^2)\big(|d|u|^2|^2-2|u|^2|\nabla u|^2\big).
\end{split}
\end{equation}
The last term is controlled by the first-order defect:
\begin{equation*}
\big||d|u|^2|^2-2|u|^2|\nabla u|^2\big|\leq C|u|^2|\nabla u|\big(|\nabla u|^2-(*K)(\nu)\big)^{1/2}.
\end{equation*}
Indeed, in a local unitary trivialisation and an oriented orthonormal frame with $e_3=\nu$,
\begin{equation*}
\begin{split}
|d|u|^2|^2-2|u|^2|\nabla^Au|^2&=2\left\langle\sum_{i=1}^3(\nabla^A_{e_i}u)^2,u^2\right\rangle,\\
\sum_{i=1}^3(\nabla^A_{e_i}u)^2&=(\nabla^A_{e_1}u+i\nabla^A_{e_2}u)(\nabla^A_{e_1}u-i\nabla^A_{e_2}u)+(\nabla^A_{e_3}u)^2.
\end{split}
\end{equation*}
The first-order defect is given by \eqref{eq: planar defect}.
Since $a\chi'(a)$ is bounded, \eqref{eq: strong scalar}, \eqref{eq: first order convergence} and the energy bound show that the last three terms in \eqref{eq: unsigned identity} tend to zero in $L^1_{\mathrm{loc}}(\Omega)$.

Moreover, $|u|^2\to1$ in $L^1_{\mathrm{loc}}(\Omega)$ and $\chi$ is bounded, so $\int_1^{|u|^2}\chi(a)\,da\to0$ in $L^1_{\mathrm{loc}}(\Omega)$. Its Laplacian tends to zero against compactly supported smooth tests. By \eqref{eq: scalar mass}, $\alpha_\eps\,d\vol_g\to\mu$ distributionally. The local total variation bound $|q_\eps|\leq Ce_\eps$ and the vanishing negative part give convergence of the positive parts as Radon measures. Finally,
\begin{equation*}
\big||q_\eps|-(\alpha_\eps)_+\big|\leq|R_\eps|+(\alpha_\eps)_-,
\end{equation*}
which proves the second convergence. 

Let $P=g-\nu^\flat\otimes\nu^\flat$. The planar algebra gives
\begin{equation*}
|2S-|\nabla u|^2P|\leq C|\nabla u|\big(|\nabla u|^2-(*K)(\nu)\big)^{1/2}.
\end{equation*}
To see this, work in a local unitary trivialisation and an oriented orthonormal frame with $e_3=\nu$. Project $(\nabla^A_{e_1}u,\nabla^A_{e_2}u,\nabla^A_{e_3}u)$ onto the complex line $\{(z,iz,0):z\in\C\}$. The squared distance to this line is bounded by the defect in \eqref{eq: planar defect}, and on the line $2S=|\nabla^Au|^2P$. The change in the quadratic tensor is bounded by $C|\nabla^Au|$ times that distance.

Since $F^\top F=|F_\nabla|^2P$, the definition of $T_\eps$ gives
\begin{equation*}
T_\eps-e_\eps\nu^\flat\otimes\nu^\flat=|\nabla u|^2P-2S+\big(\eps^{-2}h^2-\eps^2|F_\nabla|^2\big)P.
\end{equation*}
Consequently,
\begin{equation*}
|T_\eps-e_\eps\nu^\flat\otimes\nu^\flat|\leq C|\nabla u|\big(|\nabla u|^2-(*K)(\nu)\big)^{1/2}+C\eps^{-1}|\xi_\eps|\big(h+\eps^2|F_\nabla|\big).
\end{equation*}
The first term tends to zero in $L^1(\Omega')$ by \eqref{eq: first order convergence} and the energy bound. For the second term, \eqref{eq: negative discrepancy} and Cauchy--Schwarz give an $L^1(\Omega')$ bound $C\eps$, since $\|h+\eps^2|F_\nabla|\|_{L^2}\leq C\eps$. The same algebra applies on the zero set with the chosen extension of $\nu$.
\end{proof}
\begin{proof}[Proof of \Cref{prop: integral curvature estimates}]
    By \eqref{eq: q decomposition},
\begin{equation*}
\nabla_{q_\eps}\nu=\alpha_\eps\nabla_\nu\nu+\nabla_{R_\eps}\nu.
\end{equation*}
On the core, $\eps^2|F_\nabla|\geq c>0$ and $e_\eps\leq C\eps^{-2}$. Since $|\alpha_\eps|\leq|q_\eps|\leq Ce_\eps$, we have $\eps^{-2}\alpha_\eps^2/|F_\nabla|^2\leq Ce_\eps$. Thus \eqref{eq: auxiliary integral curvature} and Cauchy--Schwarz give
\begin{equation}\label{eq: parallel turning}
\begin{split}
\int_{B_r(x)}|\alpha_\eps\nabla_\nu\nu|\,d\vol_g\leq{}&\left(\int_{\Omega'\cap\{|F_\nabla|>0\}}\eps^2|F_\nabla|^2|\nabla_\nu\nu|^2\,d\vol_g\right)^{1/2}\\
&\cdot\left(\int_{B_r(x)}\eps^{-2}\frac{\alpha_\eps^2}{|F_\nabla|^2}\,d\vol_g\right)^{1/2}\leq CE_\eps(u,\nabla;B_r(x))^{1/2}.
\end{split}
\end{equation}
The ratios are extended by zero off the core.

For the transverse contribution, \eqref{eq: transverse bound} gives on the core
\begin{equation*}
\begin{split}
|\nabla_{R_\eps}\nu|&\leq C|\nabla u||\nabla_\nu u||\nabla\nu|\\
&\leq C\eps^{-1}\big(|\nabla_\nu u|^2+\eps^2|F_\nabla||\nabla\nu|^2\big)\leq C\eps^{-1}h\mathcal S_\eps.
\end{split}
\end{equation*}
Here $h\geq c_0$ on the core. Choose a cutoff equal to one on $B_r(x)$, supported in $B_{2r}(x)$, with $|d\eta|\leq C/r$, and apply \eqref{eq: localized defect}. Combining it with \eqref{eq: parallel turning} proves \eqref{eq: turning estimate}. The local energy growth bound and the limit $\eps\to0$ at fixed $r$ give \eqref{eq: turning limit}.
\end{proof}

The only task left is to prove the main estimate.
\begin{proof}[Proof of \Cref{prop: curvature}]
    First we show that in every $\Omega'\Subset\Omega$,
\begin{equation}\label{eq: uniform defect}
\|\xi_\eps\|_{L^\infty(\Omega')}\leq C\eps^{-4/5}.
\end{equation}
In particular, for each fixed $c>0$ and all sufficiently small $\eps$,
\begin{equation}\label{eq: core lower bound}
\eps^2|F_\nabla|\geq c/2\quad\text{on }\Omega'\cap\{h\geq c\}.
\end{equation}

The smallness of $\eps\|\xi_\eps\|_{L^\infty}$ obtained here uses dimension three. Indeed, assuming analogous $L^2$ and Lipschitz bounds in dimension $n$, the interpolation argument below would give
\begin{equation*}
\eps\|\xi_\eps\|_{L^\infty}\leq C\eps^{(4-n)/(n+2)},
\end{equation*}
which yields smallness for $n<4$.

Choose $\Omega'\Subset\Omega''\Subset\Omega$. Equations \eqref{eq: pointwise} and \eqref{eq: curvature derivative} give $\operatorname{Lip}(\xi_\eps)\leq C\eps^{-2}$ on $\Omega''$, and $\|\xi_\eps\|_{L^\infty(\Omega'')}\leq C\eps^{-1}$. If $\|\xi_\eps\|_{L^\infty(\Omega')}>0$, a ball centred in $\overline{\Omega'}$, of radius comparable to $\eps^2\|\xi_\eps\|_{L^\infty(\Omega')}$, lies in $\Omega''$ and satisfies $|\xi_\eps|\geq\|\xi_\eps\|_{L^\infty(\Omega')}/2$. Consequently,
\begin{equation*}
c\eps^6\|\xi_\eps\|_{L^\infty(\Omega')}^5\leq\int_{\Omega''}\xi_\eps^2\,d\vol_g\leq C\eps^2.
\end{equation*}
This proves \eqref{eq: uniform defect}; \eqref{eq: core lower bound} follows from $\eps^2|F_\nabla|=h+\eps\xi_\eps$ and $\eps\|\xi_\eps\|_{L^\infty(\Omega')}\to0$. Next we prove \eqref{eq: auxiliary integral curvature}. Choose $\zeta\in C_c^\infty(\Omega)$ equal to one on a neighbourhood of $\overline{\Omega'}$. For the symmetric bilinear form $B$, set
\begin{equation*}
(\div B)(Y)=\sum_i(\nabla_{e_i}B)(e_i,Y).
\end{equation*}
The gauge expression for ${\bf W}_a$ is
\begin{equation*}
G[{\bf W}_a]=(\div B-\eps^{-2}dh)(X_a)+\sum_iB(e_i,\nabla_{e_i}X_a).
\end{equation*}
By \eqref{eq: projected fields}, the mixed terms vanish after summing the squares in $a$, so $\sum_aG[{\bf W}_a]^2\geq|\div B-\eps^{-2}dh|^2$. Moreover,
\begin{equation*}
G[\zeta{\bf W}_a]=\zeta G[{\bf W}_a]+B(X_a,\nabla\zeta).
\end{equation*}
Apply stability and \eqref{eq: gauge correction} to the compactly supported pairs $\zeta{\bf W}_a$, and use \eqref{eq: test bound}. Since $\eps^2|B|^2\leq Ce_\eps$, the cutoff term is bounded by the energy on $\spt\zeta$. It follows that
\begin{equation*}
\int_{\Omega'}\eps^2|\div B-\eps^{-2}dh|^2\,d\vol_g\leq C.
\end{equation*}
Since $dF_\nabla=0$, the vector field $|F_\nabla|\nu$ is divergence-free. On $\{|F_\nabla|>0\}$,
\begin{equation*}
\div B=d|F_\nabla|-|F_\nabla|(\nabla_\nu\nu)^\flat,\qquad \div B-\eps^{-2}dh=-|F_\nabla|(\nabla_\nu\nu)^\flat+\eps^{-1}d\xi_\eps.
\end{equation*}
Now use the interior bound for $d\xi_\eps$ from \Cref{thm: discrepancy} and conclude with \eqref{eq: auxiliary integral curvature}. It remains to prove \eqref{eq: localized defect}, using \eqref{eq: defect equation}. Indeed for every nonnegative $\phi\in C_c^\infty(\Omega)$,
\begin{equation}\label{eq: defect inequality}
\int_\Omega\phi\mathcal S_\eps\,d\vol_g\leq-\eps\int_\Omega\big(\langle d\xi_\eps,d\phi\rangle+\eps^{-2}|u|^2\xi_\eps\phi\big)\,d\vol_g+C\eps^2\int_\Omega\phi|F_\nabla|\,d\vol_g,
\end{equation}
where $C$ bounds the curvature on $\spt\phi$. To justify this inequality across $\{F_\nabla=0\}$, write $b=(*F_\nabla)^\sharp$ and regularise its norm by
\begin{equation*}
f_\delta=\sqrt{|b|^2+\delta^2},\qquad
\xi_{\eps,\delta}=\eps f_\delta-\eps^{-1}h.
\end{equation*}
The curvature equation and the one-form Weitzenb\"ock formula give the regularised defect
\begin{equation*}
\mathcal S_{\eps,\delta}
=|\nabla u|^2-\frac{\langle(*K)^\sharp,b\rangle}{f_\delta}
+\frac{\eps^2}{f_\delta}\big(|\nabla b|^2-|df_\delta|^2\big)\geq0.
\end{equation*}
The inequality follows from $|K|\leq|\nabla u|^2$ and $|df_\delta|\leq|\nabla b|$. The regularised version of \eqref{eq: defect inequality} has an additional error bounded by $\delta\int_\Omega\phi\,d\vol_g$. As $\delta\to0$, $\xi_{\eps,\delta}\to\xi_\eps$ in $H^1_{\mathrm{loc}}$ for fixed $\eps$. Off the zero set, $\mathcal S_{\eps,\delta}\to\mathcal S_\eps$, while on the zero set its lower limit is at least $|\nabla u|^2$. Fatou's lemma therefore proves \eqref{eq: defect inequality}.

Choose a fixed nonnegative $\phi\in C_c^\infty(\Omega)$ equal to one near $\overline{\Omega'}$. Integration by parts gives
\begin{equation*}
\begin{split}
\int_{\Omega'}\mathcal S_\eps\,d\vol_g
\leq{}&\eps\int_\Omega\xi_\eps\Delta\phi\,d\vol_g
-\eps^{-1}\int_\Omega|u|^2\xi_\eps\phi\,d\vol_g\\
&+C\eps^2\int_\Omega\phi|F_\nabla|\,d\vol_g\longrightarrow0.
\end{split}
\end{equation*}
Here \eqref{eq: strong scalar} controls the first two terms and \eqref{eq: scalar mass} controls the last. This proves \eqref{eq: full defect convergence}. The first-order defect is bounded by $\mathcal S_\eps$ off the zero set and by $2\mathcal S_\eps$ on it, proving \eqref{eq: first order convergence} for every measurable unit extension of $\nu$.

Finally, since $h\geq0$, test \eqref{eq: defect inequality} with $\eta h$, use the scalar equation and integrate by parts:
\begin{equation*}
\begin{split}
\int_\Omega\eta h\mathcal S_\eps\,d\vol_g\leq{}&-\eps\int_\Omega\eta\xi_\eps|\nabla u|^2\,d\vol_g+\eps\int_\Omega\langle\xi_\eps\,dh-h\,d\xi_\eps,d\eta\rangle\,d\vol_g\\
&+C\eps^2\int_\Omega\eta h|F_\nabla|\,d\vol_g.
\end{split}
\end{equation*}
By the pointwise and discrepancy estimates,
\begin{equation*}
\eps\left|\int_\Omega\eta\xi_\eps|\nabla u|^2\,d\vol_g\right|\leq\|\eps^{-1}\xi_\eps\|_{L^2(\Omega')}\left(\int_{\Omega_0}\eps^4|\nabla u|^4\,d\vol_g\right)^{1/2}\leq C\eps E_\eps(u,\nabla;\Omega_0)^{1/2}.
\end{equation*}
Also $\|h\|_{L^2(\Omega_0)}\leq C\eps E_\eps(u,\nabla;\Omega_0)^{1/2}$, $|dh|\leq|\nabla u|$, $\|d\xi_\eps\|_{L^2(\Omega')}\leq C$ and $\|\xi_\eps\|_{L^2(\Omega')}\leq C\eps$. The cutoff term is bounded by $C\eps^2\|d\eta\|_{L^\infty}E_\eps(u,\nabla;\Omega_0)^{1/2}$. Finally, $2h|F_\nabla|\leq e_\eps$, so the curvature term is at most $C\eps^2 E_\eps(u,\nabla;\Omega_0)$. Divide by $\eps$ to obtain \eqref{eq: localized defect}.
\end{proof}

\bibliography{Bib}
\bibliographystyle{siam}

\end{document}